\theoremstyle{plain}
\newtheorem{Definition}{Definition}[section]
\newtheorem{Theorem}{Theorem}[section]
\newtheorem{Lemma}{Lemma}[section]
\newtheorem{Example}{Example}[section]
\newtheorem{Remark}{Remark}[section]
\begin{document}
\title{\normalsize \textbf {Equivariant Mapping Class Group and Orbit Braid Group}\footnote{Partially supported by the NSFC grants(No. 11971112).}}

\author{\normalsize \textmd{Shuya Cai}\footnote{Corresponding author} and \textmd{Hao Li}}

\affil{\setlength{\baselineskip}{20pt} \emph{Department of Mathematics, Fudan University} \\ \emph{19110180014@fudan.edu.cn} \\ \emph{14110840001@fudan.edu.cn}}

\date{}

\maketitle

\renewcommand{\abstractname}

\begin{abstract}
\footnotesize \setlength{\baselineskip}{10pt}
\setlength{\oddsidemargin}{14 in}
\noindent
Motivated by the work of Birman about the relationship between mapping class groups and braid groups, we discuss the relationship between the orbit braid group and the equivariant mapping class group on the closed surface $M$ with a free and proper group action in this article. Our construction is based on the exact sequence given by the fibration $\mathcal{F}_{0}^{G}M \rightarrow F(M/G,n)$. The conclusion is closely connected with the braid group of the quotient space. Comparing with the situation without the group action, there is a big difference when the quotient space is $\mathbb{T}^{2}$.
\end{abstract}

{\footnotesize
\setlength{\oddsidemargin}{14 in}
\setlength{\baselineskip}{10pt}
\setlength{\parindent}{2em}
  \emph{keywords}: Equivariant mapping class group; Orbit braid group; Evaluation map; Center of braid group

  Mathematics Subject Classification 2000: 20F36, 20F38, 37A20}

\section{Introduction}

Braid groups of the plane were defined by Artin in 1925~\cite{ref1}, and further studied in ~\cite{ref2},~\cite{ref3}. Braid groups of surfaces were studied by Zariski~\cite{ref19}, and were later generalized using the following definition due to Fox~\cite{ref11}. Bellingeri simplified presentations of braid groups and pure braid groups on surfaces and showed some propertities of surface pure braid groups in ~\cite{ref4}.

Let $M$ be the closed surface, with a finite set $\mathcal{P}$ of $n$ distinguished points in $M$. $\mathcal{B}_{n}M$
\noindent(respectively $\mathcal{F}_{n}M$) denotes all homeomorphsims $M\rightarrow M$ which preserve the set $\mathcal{P}$ (respectively, each point in $\mathcal{P}$) and preserve the orientation if $M$ is oriented. $\pi_{0}(\mathcal{F}_{n}M)$ is the set of all path connected components of $\mathcal{F}_{n}M$ and the $n$-th mapping class group denoted by $Mod(M,n)$ is the set of all path connected components of $\mathcal{B}_{n}M$. The algebraic structure of the mapping class group is of great importance in the theory of Riemann surfaces. Connections between the mapping class group and the braid group of closed surfaces have been discovered, in order to help find the generators and relations of their mapping class groups.

The (pure) braid group of $M$ on $n$ strands is denoted by $B_{n}M$ ($P_{n}M$). Let $S_{g}$ be the oriented closed surface of genus $g$, with $g\geq0$. In 1969, Birman~\cite{ref5}~\cite{ref6} gave the basic relationship between mapping class groups and braid groups on $S_{g}$.
\begin{Theorem}[Birman,1969]
Let $i_{\ast}:\pi_{0}(\mathcal{F}_{n}S_{g})\rightarrow \pi_{0}(\mathcal{F}_{0}S_{g})$ be the homomorphism induced by inclusion $\mathcal{F}_{n}S_{g}\subset \mathcal{F}_{0}S_{g}$. Then
\begin{align*}
&\ker i_{\ast}\cong P_{n}S_{g} \quad if \quad g\geq2;\\
&\ker i_{\ast}\cong P_{n}S_{g}/Z(P_{n}S_{g}) \quad if \quad g=1,n\geq2 \quad or \quad g=0,n\geq3.
\end{align*}
\end{Theorem}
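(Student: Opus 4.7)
The plan is to apply the long exact sequence of the evaluation fibration. Define $\mathrm{ev}\colon \mathcal{F}_{0}S_{g}\to F(S_{g},n)$ by $\mathrm{ev}(h)=(h(p_{1}),\dots,h(p_{n}))$, where $\mathcal{P}=\{p_{1},\dots,p_{n}\}$; a Fadell--Neuwirth style argument shows this is a locally trivial fibration with fiber $\mathcal{F}_{n}S_{g}$ over the basepoint $\mathcal{P}$. The long exact homotopy sequence based at the identity reads
\begin{equation*}
\pi_{1}(\mathcal{F}_{0}S_{g}) \xrightarrow{\mathrm{ev}_{\ast}} \pi_{1}(F(S_{g},n)) \xrightarrow{\partial} \pi_{0}(\mathcal{F}_{n}S_{g}) \xrightarrow{i_{\ast}} \pi_{0}(\mathcal{F}_{0}S_{g}),
\end{equation*}
and since $\pi_{1}(F(S_{g},n))=P_{n}S_{g}$, exactness at $\pi_{0}(\mathcal{F}_{n}S_{g})$ yields the master identification $\ker i_{\ast}\cong P_{n}S_{g}/\mathrm{im}(\mathrm{ev}_{\ast})$. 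Thus the whole theorem reduces to determining $\mathrm{im}(\mathrm{ev}_{\ast})$, equivalently, understanding $\pi_{1}$ of the identity component of $\mathcal{F}_{0}S_{g}$.

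For that input I would invoke the classical theorem of Earle--Eells (with the Hamstrom/Earle--Schatz extension to $\mathrm{Homeo}^{+}$): the identity component of $\mathrm{Homeo}^{+}(S_{g})$ is contractible when $g\ge 2$, homotopy equivalent to $T^{2}$ when $g=1$, and homotopy equivalent to $SO(3)$ when $g=0$. Consequently $\pi_{1}(\mathcal{F}_{0}S_{g})$ is trivial, $\mathbb{Z}^{2}$, or $\mathbb{Z}/2$, respectively. The case $g\ge 2$ is now immediate: $\mathrm{im}(\mathrm{ev}_{\ast})=1$ and $\ker i_{\ast}\cong P_{n}S_{g}$.

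For the remaining cases the task is to prove $\mathrm{im}(\mathrm{ev}_{\ast})=Z(P_{n}S_{g})$. I would establish $\mathrm{im}(\mathrm{ev}_{\ast})\subseteq Z(P_{n}S_{g})$ by exhibiting explicit representatives for the generators of $\pi_{1}(\mathcal{F}_{0}S_{g})$ through rigid motions: a one-parameter family of rotations on $S^{2}$, and the two one-parameter subgroups of translations on $T^{2}$. Tracing these loops on $\mathcal{P}$ and reading off the resulting pure braids gives the full twist $\Delta^{2}$ on $S^{2}$ and the two ``global translation'' braids on $T^{2}$, which are standard central elements of the respective pure braid groups. For the reverse inclusion I would cite the known computations $Z(P_{n}S^{2})\cong\mathbb{Z}/2$ for $n\ge 3$ (Gillette--van Buskirk) and $Z(P_{n}T^{2})\cong\mathbb{Z}^{2}$ for $n\ge 2$ (Birman, Paris--Rolfsen), and observe that $\mathrm{im}(\mathrm{ev}_{\ast})$, being a quotient of $\pi_{1}(\mathcal{F}_{0}S_{g})$ of the matching abstract type and already containing the full center, must equal it.

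The main obstacle I foresee is the geometric identification step: proving that the evaluation of a rotation loop on $S^{2}$ is honestly a full twist, and that the two translation loops on $T^{2}$ are honestly the central translation braids, requires an explicit combinatorial braid computation rather than abstract nonsense. Everything else is long-exact-sequence bookkeeping or a clean citation of a deep homotopy-type result, and the restrictions $n\ge 3$ for $g=0$ and $n\ge 2$ for $g=1$ enter precisely through the center calculations, which degenerate in the excluded small-$n$ cases.
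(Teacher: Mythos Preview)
Your proposal is correct, and its skeleton---the evaluation fibration, the long exact sequence, and the identification $\ker i_{\ast}\cong P_{n}S_{g}/\mathrm{im}(\mathrm{ev}_{\ast})$---is exactly what underlies both Birman's original argument and the paper's equivariant generalisation in Section~3 (the paper itself only cites Theorem~1.1 as Birman's result, but its Lemmas~3.1--3.5 with $G=1$ reproduce the proof).

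The one substantive methodological difference concerns how the inclusion $\mathrm{im}(\mathrm{ev}_{\ast})\subseteq Z(P_{n}S_{g})$ is established. You propose to invoke Earle--Eells for the homotopy type of $\mathrm{Homeo}^{+}_{0}(S_{g})$, then compute $\mathrm{ev}_{\ast}$ on explicit rigid-motion generators and recognise the outputs as known central braids. The paper instead proves this inclusion abstractly (Lemma~3.3): given any $\alpha=\mathrm{ev}_{\ast}([H_{t}])$ and any pure braid $\beta=(\beta_{1},\dots,\beta_{n})$, the map $\psi(s,t)=(H_{t}(\beta_{1}(s)),\dots,H_{t}(\beta_{n}(s)))$ on $I^{2}$ exhibits a nullhomotopy of the commutator $[\alpha,\beta]$, so $\alpha$ is central. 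This argument is genus-independent, uses no deep homotopy-type theorem, and in particular disposes of the case $g\ge 2$ using only $Z(P_{n}S_{g})=1$ rather than the contractibility of $\mathrm{Homeo}^{+}_{0}(S_{g})$. It also neutralises precisely the ``main obstacle'' you flagged: no combinatorial identification of $\mathrm{ev}_{\ast}(\text{rotation})$ with $\Delta^{2}$ is needed for the $\subseteq$ direction. For the reverse inclusion in genus $0$ and $1$, the paper (Lemmas~3.4--3.5 and the discussion after) runs in the opposite direction from your plan: it starts from the known generator of the center and writes down an explicit loop $H_{t}$ of homeomorphisms whose evaluation is that generator, rather than starting from Earle--Eells generators and matching. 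Either route works; the paper's is lighter on external input, while yours is more uniform once Earle--Eells is granted.
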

\begin{Theorem}[Birman,1969]
Let $i_{\ast}:Mod(S_{g},n)\rightarrow Mod(S_{g},0)$ be the homomorphism induced by inclusion $\mathcal{B}_{n}S_{g}\subset \mathcal{B}_{0}S_{g}$. Then
\begin{align*}
&\ker i_{\ast}\cong B_{n}S_{g} \quad if \quad g\geq2;\\
&\ker i_{\ast}\cong B_{n}S_{g}/Z(B_{n}S_{g}) \quad if \quad g=1,n\geq2 \quad or \quad g=0,n\geq3.
\end{align*}
\end{Theorem}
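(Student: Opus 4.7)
The plan is to parallel the proof of Theorem 1.1, replacing the ordered configuration space $F(S_g,n)$ by the unordered configuration space $F(S_g,n)/\Sigma_n$; the braid group appearing in the long exact sequence is then the full braid group $B_nS_g$ rather than the pure braid group $P_nS_g$.

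First I would verify that the evaluation map $\mathrm{ev}\colon \mathcal{B}_0 S_g\to F(S_g,n)/\Sigma_n$ sending $\phi$ to the unordered set $\phi(\mathcal{P})$ is a locally trivial fibration with fibre $\mathcal{B}_nS_g$; this follows from the isotopy extension theorem. The associated long exact sequence in homotopy reads
\[
\pi_1(\mathcal{B}_0 S_g)\xrightarrow{\mathrm{ev}_*}\pi_1\bigl(F(S_g,n)/\Sigma_n\bigr)\xrightarrow{\partial}\pi_0(\mathcal{B}_nS_g)\xrightarrow{i_*}\pi_0(\mathcal{B}_0S_g),
\]
so, using $\pi_1(F(S_g,n)/\Sigma_n)=B_nS_g$ by definition, we obtain $\ker i_*\cong B_nS_g/K$ with $K:=\mathrm{im}(\mathrm{ev}_*)$.

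Next I would invoke the classical computation of the homotopy type of the identity component of $\mathcal{B}_0S_g$ due to Earle--Eells and Hamstrom: contractible for $g\geq 2$, homotopy equivalent to $T^2$ for $g=1$, and homotopy equivalent to $SO(3)$ for $g=0$. In particular, for $g\geq 2$ we have $\pi_1(\mathcal{B}_0S_g)=0$, whence $K=0$ and $\ker i_*\cong B_nS_g$, finishing that case.

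For $g=1,n\geq 2$ and $g=0,n\geq 3$ the main obstacle is to identify $K$ with the full centre $Z(B_nS_g)$, not merely with a subgroup of it. My strategy is to route the computation through the pure case already handled in Theorem 1.1. A loop in $\mathcal{B}_0S_g$ based at the identity ends at the identity, so the permutation associated with the resulting braid on $\mathcal{P}$ is trivial; hence $\mathrm{ev}_*$ factors through $P_nS_g\subseteq B_nS_g$, and, by comparison with the ordered-configuration-space version of the same long exact sequence, this image coincides with the subgroup $K^P$ appearing in Theorem 1.1. Since Theorem 1.1 identifies $K^P=Z(P_nS_g)$ in these low-genus cases, we get $K=Z(P_nS_g)$, and the equality $Z(P_nS_g)=Z(B_nS_g)$ (both are generated by the same rotation/translation full-twist braids, which are pure) then yields $\ker i_*\cong B_nS_g/Z(B_nS_g)$.
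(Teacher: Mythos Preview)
The paper does not supply its own proof of Theorem~1.2; it is quoted as Birman's classical result with references~\cite{ref5},~\cite{ref6}. Your argument is correct, and in fact it mirrors exactly the route the paper takes for the equivariant analogue, Theorem~3.4. There the authors also pass to the unordered configuration space, note that the evaluation map $\eta^{G}$ is the pure evaluation $\varepsilon^{G}$ composed with the quotient $F(M/G,n)\to F(M/G,n)/\Sigma_{n}$, and then (since $\mathcal{B}_{0}^{G}M=\mathcal{F}_{0}^{G}M$) read off
\[
\ker\zeta_{\ast}^{G}=\operatorname{im}\eta_{\ast}^{G}=\operatorname{im}\varepsilon_{\ast}^{G}=\ker d_{\ast}^{G},
\]
reducing everything to the pure case already handled. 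Your observation that a loop in $\mathcal{B}_{0}S_{g}$ based at the identity induces the trivial permutation---so $\mathrm{ev}_{\ast}$ factors through $P_{n}S_{g}$---is precisely this reduction in the non-equivariant setting.

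The one extra step you add, the identification $Z(P_{n}S_{g})=Z(B_{n}S_{g})$ for $g=0,1$, is needed to match Birman's formulation but is not carried out in the paper: Theorem~3.4 simply records the quotient as $B_{n}(M/G)/Z(P_{n}(M/G))$ without rewriting the denominator as the centre of the full braid group. Your invocation of Earle--Eells/Hamstrom for $\pi_{1}(\mathcal{B}_{0}S_{g})$ is also additional input beyond what the paper spells out, but it is the standard ingredient and the right one.
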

Using above results, Birman obtained a full set of generators for the mapping class groups of $n$-punctured oriented 2-manifolds. Then she computed the mapping class group of the $n$-punctured sphere and gave relations for the mapping class group of torus in a new way.

Denote the nonorientable closed surface of genus $k$ as $N_{k}$ with $k\geq1$. $S_{k-1}$ is its orientable double covering and let $\pi:S_{k-1}\rightarrow N_{k}$ be the covering map. The induced homomorphsim between braid groups $\varphi_{n}:B_{n}(N_{k})\rightarrow B_{2n}(S_{k-1})$ is injective~\cite{ref13} on the level of fundamental group. The homomorphism between mapping class groups $\phi_{n}:Mod(N_{k},n)\rightarrow Mod(S_{k-1},2n)$ induced by $\pi$ is also injective. Furthermore, if $k\geq3$ then we have a commutative diagram of the following form:\\
\xymatrix{&1 \ar[r] &B_{n}(N_{k}) \ar[d]^{\varphi_{n}} \ar[r] &Mod(N_{k},n) \ar[d]^{\phi_{n}} \ar[r]^{\psi_{n}} &Mod(N_{k},0) \ar[d]^{\phi_{0}} \ar[r] &1 \\
&1 \ar[r] &B_{2n}(S_{k-1}) \ar[r] &Mod(S_{k-1},2n) \ar[r]^{\widetilde{\psi}_{n}} &Mod(S_{k-1},0) \ar[r] &1}
\\where $\psi_{n}$ and $\widetilde{\psi}_{n}$ are the homomorphsims induced by inclusions $\mathcal{B}_{n}N_{k}\subset \mathcal{B}_{0}N_{k}$ and $\mathcal{B}_{2n}S_{k-1}\subset \mathcal{B}_{0}S_{k-1}$~\cite{ref14}.

In this article, we study the relationship between the orbit braid group and the equivariant mapping class group on the closed surface $M$ which admits a group action. Let $G$ be a discrete group and act on $M$ freely and properly. Consider a finite set $\mathcal{P}=\{\mathbf{x_{1}},\cdots,\mathbf{x_{n}}\}$ of $n$ arbitrarily chosen points on $M$ with different orbits. Define $\mathcal{B}_{n}^{G}M$ (respectively, $\mathcal{F}_{n}^{G}M$) to be the group of all $G$-homemorphisms $f:M\rightarrow M$ which satisfy $f(G\mathcal{P})=G\mathcal{P}$ (respectively, $f(G\mathbf{x_{i}})=G\mathbf{x_{i}},i=1,\cdots,n$) and preserve the orientation if $M$ is oriented. These two groups are endowed with compact-open topology. $\pi_{0}(\mathcal{F}_{n}^{G}M,id)$ is the set of all path connected components of $\mathcal{F}_{n}^{G}M$. The equivariant mapping class group denoted by $M^{G}(M,n)$ is defined to be the set of all path connected components $\mathcal{B}_{n}^{G}M$.

The concept of the orbit braid group on an arbitrary connected topological manifold $M$ is discussed in ~\cite{ref16}. Let $M$ be a connected topological manifold of dimension at least 2 with an effective action of a finite group $G$. The orbit configuration space of $n$ ordered points in the $G$-space $M$ is defined by
$$F_{G}(M,n)=\{(\mathbf{x_{1}},\cdots,\mathbf{x_{n}})\in M^{n}:G(\mathbf{x_{i}})\bigcap G(\mathbf{x_{j}})=\emptyset \quad if \quad i\neq j\}$$
with subspace topology and $G(\mathbf{x})$ denotes the orbit of $x$. The action of $G$ on $M$ induces a natural action of $G^{n}$ on $F_{G}(M,n)$. There is also a canonical free action of the symmetric group $\Sigma_{n}$ on $F_{G}(M,n)$. Generally, these two actions are not commutative. Let $\pi_{1}^{E}(F_{G}(M,n),x,x^{orb})$ be the set consisting of the homotopy classes relative to $\partial I$ of all paths $\alpha:I\rightarrow F_{G}(M,n)$ with $\alpha(0)=x$ and $\alpha(1)\in x^{orb}$, where $x^{orb}=\{gx^{orb}:g\in G^{n},\sigma\in\Sigma_{n}\}$, which is the orbit set at $x$ under two actions of $G^{n}$ and $\Sigma_{n}$. The orbit braid group $B_{n}^{orb}(M,G)$ bijectively corresponds to $\pi_{1}^{E}(F_{G}(M,n),x,x^{orb})$. And the pure orbit braid group $P_{n}^{orb}(M,G)$ bijectively corresponds to $\pi_{1}^{E}(F_{G}(M,n),x,G^{n}(x))$. If the action $G$ on $M$ is free, $P_{n}^{orb}(M,G)\cong P_{n}(M/G)$ and $B_{n}^{orb}(M,G)\cong B_{n}(M/G)$. Thus the orbit braid group $B_{n}^{orb}(M,G)$ (respectively, $P_{n}^{orb}(M,G)$) we will discuss later corresponds to $B_{n}(M/G)$ (respectively, $P_{n}(M/G)$), where $M/G$ is an oriented or nonorientable closed surface~\cite{ref15}.

We prove the following results in this article:
\begin{Theorem}[Theorem 3.1-3.3]
Let $i_{\ast}^{G}:\pi_{0}(\mathcal{F}_{n}^{G}M)\rightarrow\pi_{0}(\mathcal{F}_{0}^{G}M)$ be the homomorphism induced by inclusion $\mathcal{F}_{n}^{G}M\subset \mathcal{F}_{0}^{G}M$. Then
\begin{align*}
\ker i_{\ast}^{G}&\cong P_{n}(M/G),\quad if\quad M/G \quad is \quad S_{g},\quad g\geq2 \quad or \quad N_{k},\quad k\geq2;\\
\ker i_{\ast}^{G}&\cong P_{n}(M/G)/Z(P_{n}(M/G)),\quad if\quad M/G \quad is \quad \mathbb{S}^{2} \quad or \quad \mathbb{R}P^{2};\\
\end{align*}
When $M/G$ is $\mathbb{T}^{2}$,
\begin{align*}
M&=\mathbb{T}^{2},\quad G=\mathds{Z}/q\mathds{Z}\bigoplus\mathds{Z}/r\mathds{Z},\quad q,r\geq1 \quad and\\
\ker i_{\ast}^{G}&\cong P_{n}(M/G)/<\widetilde{{a}}^{q},\widetilde{b}^{r}:\widetilde{{a}}^{q}\widetilde{b}^{r}=\widetilde{b}^{r}\widetilde{{a}}^{q}>. \end{align*}
\end{Theorem}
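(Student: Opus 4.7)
The unifying tool is an equivariant Birman/Fadell--Neuwirth fibration. Writing $\bar{\mathbf{x}}_i := G\mathbf{x}_i\in M/G$, consider the evaluation map
\[
ev\colon \mathcal{F}_{0}^{G}M\longrightarrow F(M/G,n),\qquad f\longmapsto \bigl(\bar f(\bar{\mathbf{x}}_{1}),\ldots,\bar f(\bar{\mathbf{x}}_{n})\bigr),
\]
where $\bar f$ is the homeomorphism of $M/G$ induced by the $G$-equivariant $f$. The fibre over the basepoint is precisely $\mathcal{F}_{n}^{G}M$, since $\bar f(\bar{\mathbf{x}}_{i})=\bar{\mathbf{x}}_{i}$ is the same as $f(G\mathbf{x}_{i})=G\mathbf{x}_{i}$. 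The first step is to verify that $ev$ is a Serre fibration; I would argue this by lifting small isotopies of $M/G$ supported in disjoint discs around the $\bar{\mathbf{x}}_{i}$ equivariantly through the free proper cover $M\to M/G$ to produce local sections, the natural equivariant analogue of the lifting argument behind Birman's classical fibration.

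The tail of the resulting long exact sequence reads
\[
\pi_{1}(\mathcal{F}_{0}^{G}M,\mathrm{id}) \xrightarrow{\,ev_{*}\,} \pi_{1}(F(M/G,n)) \xrightarrow{\,\partial\,} \pi_{0}(\mathcal{F}_{n}^{G}M) \xrightarrow{\,i_{*}^{G}\,} \pi_{0}(\mathcal{F}_{0}^{G}M),
\]
so exactness yields $\ker i_{*}^{G}\cong \pi_{1}(F(M/G,n))/\mathrm{im}(ev_{*})$. Since $G$ acts freely, the orbit-configuration discussion recalled in Section~1 gives $\pi_{1}(F(M/G,n))\cong P_{n}(M/G)$, so the whole problem reduces to computing $\mathrm{im}(ev_{*})$ case by case.

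For $M/G=S_{g}$ with $g\geq 2$ or $N_{k}$ with $k\geq 2$, the Hamstrom/Earle--Eells theorems give that the identity component of $\mathrm{Homeo}(M/G)$ is contractible; lifting a contracting isotopy equivariantly through the finite cover yields $\pi_{1}(\mathcal{F}_{0}^{G}M,\mathrm{id})=0$, so $\mathrm{im}(ev_{*})=0$ and $\ker i_{*}^{G}\cong P_{n}(M/G)$. For $M/G\in\{\mathbb{S}^{2},\mathbb{R}P^{2}\}$, the identity component of $\mathrm{Homeo}^{+}(M/G)$ is homotopy equivalent to $SO(3)$ or $SO(2)$, whose $\pi_1$-generator evaluates to the usual generator of $Z(P_{n}(M/G))$ (as in the non-equivariant Birman theorem); the corresponding rotation lifts to a $G$-equivariant loop in $\mathcal{F}_{0}^{G}M$ (either $G$ is trivial, or $G=\mathbb{Z}/2$ acting antipodally on $\mathbb{S}^{2}$ commutes with $SO(3)$), so $\mathrm{im}(ev_{*})=Z(P_{n}(M/G))$, giving $\ker i_{*}^{G}\cong P_{n}(M/G)/Z(P_{n}(M/G))$.

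The torus case contains the genuine novelty and is the main obstacle. A short covering-theoretic argument first forces $M=\mathbb{T}^{2}$ with $G$ a finite translation subgroup, necessarily isomorphic to $\mathbb{Z}/q\oplus\mathbb{Z}/r$. Because every translation of $\mathbb{T}^{2}$ commutes with $G$, the torus of translations embeds in the identity component of $\mathcal{F}_{0}^{G}M$ and, by an equivariant Earle--Eells argument, is in fact a deformation retract, so $\pi_{1}(\mathcal{F}_{0}^{G}M,\mathrm{id})\cong\mathbb{Z}^{2}$. A one-parameter family of translations $t\mapsto T_{tv}$ closes up at $\mathrm{id}$ if and only if $v$ lies in the translation lattice of $M$; expressed in the finer lattice of $M/G$ this sublattice is generated by the $q$-fold and $r$-fold wraps of the standard meridional and longitudinal loops. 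Hence the two generators of $\pi_{1}(\mathcal{F}_{0}^{G}M,\mathrm{id})$ evaluate in $P_{n}(\mathbb{T}^{2})$ to $\tilde a^{q}$ and $\tilde b^{r}$---the $q$-th and $r$-th powers of the standard translation pure braids---and these commute. The final delicate step, which I expect to be the hardest, is to verify via Bellingeri's presentation of $P_{n}(\mathbb{T}^{2})$ that $\mathrm{im}(ev_{*})$ is freely generated by these two elements subject only to the commutation relation; dividing $P_{n}(\mathbb{T}^{2})$ by this subgroup yields the stated presentation of $\ker i_{*}^{G}$.
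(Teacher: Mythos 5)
Your skeleton is exactly the paper's: the same pure evaluation map (the paper's $\varepsilon^{G}$, Definition 3.1), shown to be a locally trivial bundle with fibre $\mathcal{F}_{n}^{G}M$ (Lemmas 3.1--3.2), the same long exact sequence, and the same reduction of $\ker i_{*}^{G}$ to $P_{n}(M/G)/\mathrm{im}(\varepsilon_{*}^{G})$. Where you genuinely diverge is in computing $\mathrm{im}(\varepsilon_{*}^{G})$: you propose to determine $\pi_{1}(\mathcal{F}_{0}^{G}M)$ outright via equivariant versions of Hamstrom/Earle--Eells, whereas the paper never touches the homotopy type of homeomorphism groups. Its Lemma 3.3 shows by an elementary two-parameter square argument that $\mathrm{im}(\varepsilon_{*}^{G})\subseteq Z(P_{n}(M/G))$ --- which also gives for free the normality of the subgroup you are quotienting by, a point you leave implicit --- and the three cases are then settled by computing the centers (Section 2) and exhibiting explicit rotation/translation loops that realize the central generators (Lemmas 3.4, 3.5 and 3.7). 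The paper's route is more elementary; yours buys a conceptual explanation (the image of $\pi_{1}$ of the homeomorphism group) at the cost of importing, and equivariantly upgrading, nontrivial theorems that you do not prove, in particular the passage from $\pi_{1}$ of $\mathrm{Homeo}_{0}(M/G)$ to $\pi_{1}(\mathcal{F}_{0}^{G}M)$ via unique homotopy lifting through the cover.

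Two concrete problems. First, $\mathrm{Homeo}_{0}$ of the Klein bottle $N_{2}$ is homotopy equivalent to $S^{1}$, not contractible, so your vanishing argument fails for $M/G=N_{2}$: the loop of homeomorphisms coming from the free circle action evaluates to a nontrivial central element of $P_{n}(N_{2})$ (already $Z(\pi_{1}(N_{2}))\cong\mathbb{Z}$). This is not only your problem --- the paper's Theorem 2.2 deduces $Z(P_{1}(N_{2}))=1$ from Murasugi's theorem, which requires at least three generators, so the $N_{2}$ case of the statement is itself in doubt --- but your proof as written asserts a false fact there, whereas an honest run of your own method would actually detect the exception. Second, smaller issues: $\mathrm{Homeo}_{0}(\mathbb{R}P^{2})\simeq SO(3)$, not $SO(2)$ (harmless for the conclusion); in the torus case the subgroup $p_{*}\pi_{1}(\mathbb{T}^{2})$ is only of the form $q\mathbb{Z}\oplus r\mathbb{Z}$ after a change of basis by Smith normal form, a point both you and the paper elide; and your ``final delicate step'' needs no appeal to Bellingeri's presentation --- once the image is known to lie in $Z(P_{n}(\mathbb{T}^{2}))=\langle\widetilde{a},\widetilde{b}\rangle\cong\mathbb{Z}^{2}$ (Birman's computation, quoted as Theorem 2.1), the subgroup generated by $\widetilde{a}^{q},\widetilde{b}^{r}$ is automatically free abelian of rank two.
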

\begin{Theorem}[Theorem 3.4]
Let $j_{\ast}^{G}:Mod_{G}(M,n)\rightarrow Mod_{G}(M,0)$ be the homomorphism induced by inclusion $\mathcal{B}_{n}^{G}M\subset \mathcal{B}_{0}^{G}M$. Then
\begin{align*}
\ker j_{\ast}^{G}&\cong B_{n}(M/G),\quad if\quad M/G \quad is \quad S_{g},\quad g\geq2 \quad or \quad N_{k},\quad k\geq2;\\
\ker j_{\ast}^{G}&\cong B_{n}(M/G)/Z(P_{n}(M/G)),\quad if\quad M/G \quad is \quad \mathbb{S}^{2} \quad or \quad \mathbb{R}P^{2};\\
\end{align*}
When $M/G$ is $\mathbb{T}^{2}$,
\begin{align*}
M&=\mathbb{T}^{2},\quad G=\mathds{Z}/q\mathds{Z}\bigoplus\mathds{Z}/r\mathds{Z},\quad q,r\geq1 \quad and\\
\ker j_{\ast}^{G}&\cong B_{n}(M/G)/<\widetilde{{a}}^{q},\widetilde{b}^{r}:\widetilde{{a}}^{q}\widetilde{b}^{r}=\widetilde{b}^{r}\widetilde{{a}}^{q}>. \end{align*}
\end{Theorem}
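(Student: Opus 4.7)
The plan is to deduce Theorem 3.4 from Theorem 3.1--3.3 by exploiting the short exact sequence relating the equivariant braid groups to their pure analogues via the symmetric group $\Sigma_{n}$, in direct parallel with Birman's passage from her Theorem 1 to her Theorem 2.

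First I would establish the short exact sequence of topological groups
$$1 \rightarrow \mathcal{F}_{n}^{G}M \rightarrow \mathcal{B}_{n}^{G}M \xrightarrow{\rho} \Sigma_{n} \rightarrow 1,$$
where $\rho(f)$ is the permutation of the orbit set $\{G\mathbf{x}_{1},\ldots,G\mathbf{x}_{n}\}$ induced by $f$. Exactness in the middle is immediate from the definitions, and surjectivity onto $\Sigma_{n}$ follows from connectedness of $M/G$: any permutation of $\{[\mathbf{x}_{1}],\ldots,[\mathbf{x}_{n}]\}$ in $M/G$ can be realised by a homeomorphism of $M/G$ and then lifted equivariantly to $M$ because $G$ acts freely and properly. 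Passing to path components (and using that $\Sigma_{n}$ is discrete) yields
$$1 \rightarrow \pi_{0}(\mathcal{F}_{n}^{G}M) \rightarrow Mod_{G}(M,n) \xrightarrow{\overline{\rho}} \Sigma_{n} \rightarrow 1.$$

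Since $\mathcal{F}_{0}^{G}M = \mathcal{B}_{0}^{G}M$, the inclusion $\mathcal{B}_{n}^{G}M \subset \mathcal{B}_{0}^{G}M$ kills the $\Sigma_{n}$-part, producing a commutative diagram
$$\begin{array}{ccccccccc}
1 & \rightarrow & \pi_{0}(\mathcal{F}_{n}^{G}M) & \rightarrow & Mod_{G}(M,n) & \xrightarrow{\overline{\rho}} & \Sigma_{n} & \rightarrow & 1 \\
& & \downarrow i_{\ast}^{G} & & \downarrow j_{\ast}^{G} & & & & \\
1 & \rightarrow & \pi_{0}(\mathcal{F}_{0}^{G}M) & = & Mod_{G}(M,0). & & & &
\end{array}$$
A snake-lemma style chase then shows that $\ker j_{\ast}^{G}$ sits in the extension
$$1 \rightarrow \ker i_{\ast}^{G} \rightarrow \ker j_{\ast}^{G} \rightarrow \Sigma_{n} \rightarrow 1,$$
where the right-hand arrow is surjective because every permutation is represented by an equivariant homeomorphism in $\mathcal{B}_{n}^{G}M$ that becomes isotopic to the identity after forgetting the marked orbits (lift a genuine braid of $M/G$ starting and ending at the same unordered point set).

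Finally I would identify this extension with the classical surface-braid extension $1 \rightarrow P_{n}(M/G) \rightarrow B_{n}(M/G) \rightarrow \Sigma_{n} \rightarrow 1$ through the isomorphism supplied by Theorem 3.1--3.3, quotienting by the appropriate central subgroup in the low-complexity cases. For $M/G = S_{g}$ with $g\geq 2$ or $N_{k}$ with $k\geq 2$, this directly yields $\ker j_{\ast}^{G} \cong B_{n}(M/G)$. For $M/G = \mathbb{S}^{2}$ or $\mathbb{R}P^{2}$, the subgroup $Z(P_{n}(M/G))$ is also normal in $B_{n}(M/G)$, so the quotient extension gives $\ker j_{\ast}^{G} \cong B_{n}(M/G)/Z(P_{n}(M/G))$; the torus case is treated analogously with the explicit subgroup $\langle \widetilde{a}^{q}, \widetilde{b}^{r}\rangle$. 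The main obstacle will be the naturality step, namely verifying that the surjection $\ker j_{\ast}^{G} \twoheadrightarrow \Sigma_{n}$ coincides, under the isomorphism of the pure case, with the standard symmetric-group projection from $B_{n}(M/G)$. Concretely, one must trace an equivariant isotopy in $\mathcal{B}_{0}^{G}M$ down to a loop of orbit configurations in $F(M/G,n)$ via the fibration used in the proof of Theorem 3.1--3.3 and check that the induced permutation of endpoints matches $\overline{\rho}$; once this compatibility is in place, the identification of $\ker j_{\ast}^{G}$ is automatic.
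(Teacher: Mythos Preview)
Your approach is correct but differs from the paper's. The paper does not pass through the $\Sigma_{n}$-extension at all. Instead it introduces a second evaluation map
\[
\eta^{G}:\mathcal{B}_{0}^{G}M\longrightarrow F(M/G,n)/\Sigma_{n},\qquad f\longmapsto\{[f(\mathbf{x}_{1})],\ldots,[f(\mathbf{x}_{n})]\},
\]
observes that $\eta^{G}$ is simply $\varepsilon^{G}$ followed by the covering $F(M/G,n)\to F(M/G,n)/\Sigma_{n}$, and hence is a locally trivial bundle with fibre $\mathcal{B}_{n}^{G}M$. Its long exact sequence
\[
\cdots\to\pi_{1}(\mathcal{B}_{0}^{G}M)\xrightarrow{\eta_{\ast}^{G}} B_{n}(M/G)\xrightarrow{\zeta_{\ast}^{G}}\pi_{0}(\mathcal{B}_{n}^{G}M)\xrightarrow{j_{\ast}^{G}}\pi_{0}(\mathcal{B}_{0}^{G}M)\to 1
\]
then gives $\ker j_{\ast}^{G}\cong B_{n}(M/G)/\operatorname{im}\eta_{\ast}^{G}$. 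Since $\mathcal{B}_{0}^{G}M=\mathcal{F}_{0}^{G}M$, one has $\operatorname{im}\eta_{\ast}^{G}=\operatorname{im}\varepsilon_{\ast}^{G}=\ker d_{\ast}^{G}$, and the already-computed values of $\ker d_{\ast}^{G}$ finish the proof in one line.

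What each approach buys: the paper's route is shorter and sidesteps entirely the ``main obstacle'' you flag, namely matching your extension $1\to\ker i_{\ast}^{G}\to\ker j_{\ast}^{G}\to\Sigma_{n}\to1$ with the surface-braid extension. In your argument that naturality check is genuinely needed (two extensions with the same kernel and quotient need not have isomorphic middle terms), and carrying it out amounts to constructing the map $\zeta_{\ast}^{G}$ anyway. Your approach, on the other hand, makes the parallel with Birman's passage from pure to full braid groups more transparent and would transfer more readily to settings where one already has the pure case in hand but no direct unordered fibration.
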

This article is organized as follows. In Section 2 we conclude the centers of pure braid groups on oriented and nonorientable closed surfaces. Section 3 is the main part of this article. We establish the relationship between the orbit braid group and the equivariant mapping class group on the closed surface which admits a free and proper group action. The proofs of Theorem 1.3 and 1.4 are given. The conclusion is closely connected with the quotient space. And comparing with the situation without the group action, there is a big difference when the quotient space is $\mathbb{T}^{2}$.
\section{The centers of pure braid groups on closed surfaces}
First, we will briefly recall the definition and properties of braid groups. Let $M$ be a smooth manifold. The configuration space of $n$ ordered points in $M$, denoted $F(M,n)$ is defined as:
$$F(M,n):=\{(\mathbf{x_{1}},\cdots,\mathbf{x_{n}})\in M^{n}:\mathbf{x_{i}}\neq \mathbf{x_{j}}\quad if \quad i\neq j\}.$$
There is a natural action of the symmetric group $\Sigma_{n}$ on the space $F(M,n)$, given by permuting the coordinates. The configuration space of $n$ unordered points in $M$ is the quotient space:
$$C(M,n):=F(M,n)/\Sigma_{n}.$$
Following Fox and Neuwirth, the $n$th pure braid group $P_{n}(M)$ (respectively, the $n$th braid group $B_{n}(M)$) is defined to be the fundamental group of $F(M,n)$ (respectively, of $C(M,n)$)~\cite{ref11}.

If $m>n$ are positive integers, we can define a homomorphism $\theta_{\ast}:P_{m}(M)\rightarrow P_{n}(M)$ induced by the projective $\theta:F(M,m)\rightarrow F(M,n)$ defined by
$$\theta((\mathbf{x_{1}},\cdots,\mathbf{x_{m}}))=(\mathbf{x_{1}},\cdots,\mathbf{x_{n}}).$$
In \cite{ref9}, Fadell and Neuwirth study the map $\theta$, and show that it is a locally trivial fibration. The fiber over a point $(\mathbf{x_{1}},\cdots,\mathbf{x_{n}})$ of the base space is $F(M-\{\mathbf{x_{1}},\cdots,\mathbf{x_{n}}\},m-n)$. Applying the associtaed long exact homotopy sequence, we obtain the pure braid group exact sequence of Fadell and Neuwirth:
$$\cdots \pi_{2}(F(M,n))\rightarrow P_{m-n}(M-\{\mathbf{x_{1}},\cdots,\mathbf{x_{n}}\})\xrightarrow {\delta_{\ast}} P_{m}(M)\xrightarrow{\theta_{\ast}} P_{n}(M)\rightarrow 1.$$
The following short exact sequence is proved to be true where $n\geq3$ if $M=\mathbb{S}^{2}$, $n\geq2$ if $M=\mathbb{R}P^{2}$ and $n \geq1$ otherwise in $\cite{ref14}\cite{ref15}\cite{ref16}$:
$$1\rightarrow P_{m-n}(M-\{\mathbf{x_{1}},\cdots,\mathbf{x_{n}}\})\xrightarrow {\delta_{\ast}} P_{m}(M)\xrightarrow{\theta_{\ast}} P_{n}(M)\rightarrow 1.$$

Let $S_{g}$ be the oriented closed surface of genus $g$. Birman computed all the centers of pure braid groups on oriented closed surface in \cite{ref5} and \cite{ref6}:
\begin{Theorem}[Birman,1969]
\begin{align*}
&Z(P_{n}(S_{g}))=1,\quad g\geq2;\\
&Z(P_{n}(\mathbb{T}^{2}))=<\widetilde{a},\widetilde{b}|\widetilde{a}\widetilde{b}=\widetilde{b}\widetilde{a}>;\\
&Z(P_{n}(\mathbb{S}^{2}))=\mathds{Z}_{2}^{2},\quad n\geqslant3;\\
&Z(P_{1}(\mathbb{S}^{2}))=Z(P_{2}(\mathbb{S}^{2}))=1.
\end{align*}
\end{Theorem}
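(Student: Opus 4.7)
The plan is to apply the Fadell--Neuwirth short exact sequence recalled just above the theorem, specialised to $m=n$ and the second index $n-1$,
\[
1 \longrightarrow \pi_{1}\bigl(M - \{\mathbf{x}_{1}, \ldots, \mathbf{x}_{n-1}\}\bigr) \xrightarrow{\delta_{\ast}} P_{n}(M) \xrightarrow{\theta_{\ast}} P_{n-1}(M) \longrightarrow 1,
\]
inductively on $n$. Any $z \in Z(P_{n}(M))$ projects under $\theta_{\ast}$ into $Z(P_{n-1}(M))$, so once the center is known at the previous stage one tries to lift its elements diagonally into $P_{n}(M)$ and subtract, reducing to analysing $z$ in the fiber $F = \pi_{1}(M - \{\mathbf{x}_{1},\ldots,\mathbf{x}_{n-1}\})$; centrality then forces the residue to lie in $Z(F)$. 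The unifying observation is that puncturing a closed surface at one or more points produces a free fundamental group, and in every case we consider the rank is at least $2$, so $Z(F)=1$.

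For $g \geq 2$, the argument is the cleanest: the base case $P_{1}(S_{g}) = \pi_{1}(S_{g})$ is centerless because surface groups of genus $\geq 2$ are non-elementary hyperbolic groups, and the inductive step then forces any central $z$ into the free fiber $F$, whence $z \in Z(F) = 1$. By contrast, for $\mathbb{T}^{2}$ we have $\pi_{1}(\mathbb{T}^{2}) = \mathds{Z}^{2}$, which equals its own center, so the $\theta_{\ast}$-image of a central element can be arbitrary. Here I would construct explicit central lifts $\widetilde{a}, \widetilde{b} \in P_{n}(\mathbb{T}^{2})$ by rigidly translating the whole configuration $(\mathbf{x}_{1},\ldots,\mathbf{x}_{n})$ simultaneously along each of the two generating loops of $\mathbb{T}^{2}$; since every strand traces the same path, these braids commute with every other braid and with each other, placing $\langle \widetilde{a},\widetilde{b}\rangle$ inside $Z(P_{n}(\mathbb{T}^{2}))$. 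Given any central $z$, multiplying by a suitable monomial $\widetilde{a}^{-i}\widetilde{b}^{-j}$ lands the element in the fiber, and the free-center argument then gives $z \in \langle\widetilde{a},\widetilde{b}\rangle$.

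The sphere case splits off small $n$: $P_{1}(\mathbb{S}^{2})=1$ by simple connectedness, and $P_{2}(\mathbb{S}^{2})=1$ because the projection $F(\mathbb{S}^{2},2)\to \mathbb{S}^{2}$ has contractible fiber $\mathbb{S}^{2}-\{\mathrm{pt}\}\simeq\mathbb{R}^{2}$, so $F(\mathbb{S}^{2},2)\simeq \mathbb{S}^{2}$ and its $\pi_{1}$ is trivial. For $n \geq 3$ the Fadell--Neuwirth short exact sequence becomes available (with the appropriate lower bound on indices), and the same scheme applies: one exhibits the distinguished central ``full-twist'' element obtained by simultaneously rotating all strands, shows it has finite order, and checks inductively that every central braid is accounted for once the free-group fiber contribution is killed. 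The main obstacle throughout is the torus step: I must verify that the diagonal translations $\widetilde{a},\widetilde{b}$ really are well-defined elements of $P_{n}(\mathbb{T}^{2})$ which generate a free abelian subgroup of rank $2$ (and do not collapse modulo any hidden braid relation), and that they exhaust the entire $\theta_{\ast}$-image of the center. This amounts to producing an honest section of the Fadell--Neuwirth fibration over $\pi_{1}(\mathbb{T}^{2}) \subset P_{1}(\mathbb{T}^{2})$ at every level, which is the most delicate point of the proof.
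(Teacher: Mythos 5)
The paper offers no proof of this theorem at all --- it is quoted from Birman~\cite{ref5}\cite{ref6} --- but your induction through the Fadell--Neuwirth fibration is exactly the method the paper itself uses for the nonorientable analogues (Theorems 2.2 and 2.4): push a central element into the free fiber, where a nonabelian free group has trivial center, after exhibiting an explicit central element that $\theta_{\ast}$ carries onto the generator of the center one level down. So the strategy is right; two details need repair. First, for $\mathbb{S}^{2}$ the sequence $1\to\pi_{1}(\mathbb{S}^{2}-\{\mathbf{x_{1}},\dots,\mathbf{x_{n-1}}\})\to P_{n}(\mathbb{S}^{2})\to P_{n-1}(\mathbb{S}^{2})\to 1$ is only exact when the base has at least $3$ strands, i.e.\ for $n\geq 4$, so the induction cannot begin at $n=3$ by this route: you must compute $P_{3}(\mathbb{S}^{2})$ directly (e.g.\ $F(\mathbb{S}^{2},3)$ is a torsor for the M\"obius group, hence homotopy equivalent to $SO(3)$, giving $P_{3}(\mathbb{S}^{2})\cong\mathds{Z}_{2}$, abelian and equal to its own center) and only then run your step for $n\geq 4$. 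Carried out this way your full-twist argument yields $Z(P_{n}(\mathbb{S}^{2}))\cong\mathds{Z}_{2}$ for $n\geq 3$, which is the standard statement; be aware the exponent in the paper's ``$\mathds{Z}_{2}^{2}$'' does not match this. Second, ``every strand traces the same path'' is not by itself a proof that $\widetilde{a},\widetilde{b}$ are central: the honest argument is that each is the image under the evaluation map of a loop $H_{t}$ in $\mathrm{Homeo}(\mathbb{T}^{2})$ based at the identity (translation by $t$ in one coordinate, $H_{0}=H_{1}=\mathrm{id}$), and the commuting-square homotopy $\psi(s,t)=H_{t}(\beta_{i}(s))$ --- written out in the paper's Lemma 3.3 in the equivariant setting --- shows that the image of $\pi_{1}$ of the homeomorphism group is central. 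Once that is in place, $\langle\widetilde{a},\widetilde{b}\rangle\cong\mathds{Z}^{2}$ follows because this abelian subgroup surjects onto $P_{1}(\mathbb{T}^{2})\cong\mathds{Z}^{2}$ under the forgetful map, and your reduction-to-the-fiber step closes the torus case. These are repairs of detail, not of strategy.
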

Let $N_{k}$ be the nonorientable closed surface of genus $k$. Next we will compute the center of $P_{n}(N_{k})$.
\begin{Theorem}
If $k\geq2$, then $Z(P_{n}(N_{k}))=1$.
\end{Theorem}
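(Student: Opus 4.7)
The plan is to induct on $n$ using the Fadell--Neuwirth short exact sequence already recalled in the paper. Since $N_k$ with $k\ge 2$ is neither $\mathbb{S}^2$ nor $\mathbb{R}P^2$, taking the parameters $(m,n)\mapsto (n,n-1)$ in that sequence yields, for every $n\ge 2$,
$$1\longrightarrow \pi_1\bigl(N_k\setminus\{\mathbf{x_1},\ldots,\mathbf{x_{n-1}}\}\bigr)\xrightarrow{\ \delta_\ast\ } P_n(N_k)\xrightarrow{\ \theta_\ast\ } P_{n-1}(N_k)\longrightarrow 1.$$
The kernel is the fundamental group of an open surface, hence a free group; for $k\ge 2$ and $n\ge 2$ its rank is $k+n-2\ge 2$, so its center is trivial.

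For the base case $n=1$ one needs $Z(\pi_1(N_k))=1$. For $k\ge 3$ this is standard: $N_k$ is a $K(\pi,1)$ of negative Euler characteristic, so $\pi_1(N_k)$ is a torsion-free non-abelian Fuchsian/hyperbolic group and has trivial center; equivalently, one can read this off the one-relator presentation $\langle a_1,\ldots,a_k\mid a_1^2\cdots a_k^2\rangle$ via Magnus--Karrass--Solitar. (The Klein-bottle case $k=2$ is genuinely different, since $Z(\pi_1(N_2))\cong\mathbb{Z}$; this will require a separate argument at the level of $P_n$, for which we must verify that the central $\mathbb{Z}$ downstairs does not lift to a central element of $P_n(N_2)$.)

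For the inductive step, assume $Z(P_{n-1}(N_k))=1$ and pick $z\in Z(P_n(N_k))$. Since the homomorphism $\theta_\ast$ carries central elements to central elements, $\theta_\ast(z)\in Z(P_{n-1}(N_k))=1$, so $z$ lies in the free subgroup $F:=\ker\theta_\ast$. Being central in the whole group, $z$ commutes with every element of $F$; since $z\in F$ itself, this forces $z\in Z(F)=1$, completing the induction.

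The main obstacle I expect is securing the base case, and in particular handling $k=2$ honestly: for $k\ge 3$ the triviality of $Z(\pi_1(N_k))$ is classical, but for the Klein bottle the naive induction breaks down and one needs an extra argument — presumably exploiting the action of $P_{n-1}(N_2)$ on $F$ by conjugation via the splitting/lifting data of the fibration, and checking that the would-be central element $\widetilde{a}^2$ acquires non-central conjugates in $P_n(N_2)$. The rest of the argument, built on the Fadell--Neuwirth reduction and the triviality of centers of free groups of rank $\ge 2$, is then purely formal.
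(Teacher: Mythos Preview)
Your strategy is identical to the paper's: induct on $n$ using the Fadell--Neuwirth short exact sequence, observe that $\theta_\ast$ sends central elements to central elements, and conclude that any $z\in Z(P_n(N_k))$ lands in the free kernel, whose center is trivial because its rank is at least $2$. The inductive step you wrote matches the paper's essentially word for word.

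Where you diverge is at the base case, and you are in fact more careful than the paper. The paper handles $n=1$ by citing Murasugi's theorem on centers of one-relator groups to claim $Z(\pi_1(N_k))=1$ for all $k\geq 2$. But Murasugi's result gives triviality of the center only when the one-relator presentation has at least three generators; for $k=2$ the Klein-bottle group $\langle \rho_1,\rho_2\mid \rho_1^2\rho_2^2\rangle$ has center isomorphic to $\mathbb{Z}$, generated by $\rho_1^2$, exactly as you point out. So the statement as printed is literally false at $(k,n)=(2,1)$, and the paper's proof glosses over this. Your diagnosis --- that for $k=2$ one must start the induction at $n=2$ and check separately that the generator of $Z(\pi_1(N_2))$ does not lift to a central element of $P_2(N_2)$ via the conjugation action on the free kernel --- is the correct repair, though neither you nor the paper carries it out. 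For $k\geq 3$ your argument and the paper's are complete and coincide.
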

\begin{proof}
We apply an induction on the number $n$ of strands. For $n=1$,
$$P_{1}(N_{k})=\pi_{1}(N_{k})=<\rho_{1},\cdots,\rho_{k}|\prod\limits_{j=1}^{k}\rho_{j}^{2}=1>.$$
which is a finitely generated group with a single defining relation. Thus we obtain that $Z(P_{1}(N_{k}))=1$, if $k\geq2$~\cite{ref17}.
The Fadell-Neuwirth fibration gives us the exact sequence
$$1\rightarrow\pi_{1}(N_{k}-\{\mathbf{x_{1}},\cdots,\mathbf{x_{n}}\})\xrightarrow{\delta_{\ast}} P_{n+1}(N_{k})\xrightarrow{\theta_{\ast}} P_{n}(N_{k})\rightarrow 1$$
where $\pi_{1}(N_{k}-\{\mathbf{x_{1}},\cdots,\mathbf{x_{n}}\})$ is a free group, for $n\geq 1$. Suppose $Z(P_{n}(N_{k}))=1$. Since $p_{\ast}$ is surjective, $\theta_{\ast}(Z(P_{n+1}(N_{k})))\subset Z(P_{n}(N_{k}))=1$. Hence $Z(P_{n+1}(N_{k}))$ lies in the group of $\ker \theta_{\ast}=im \delta_{\ast}$, which is a free group. Thus $Z(P_{n+1}(N_{k}))=1$.
\end{proof}

In order to compute the center of $P_{n}(\mathbb{R}P^{2})$, we need to know the specific presentation of $P_{n}(\mathbb{R}P^{2})$.
\begin{Theorem}[D. L. Goncalves and J. Guaschi~\cite{ref12}]
The group $P_{n}(\mathbb{R}P^{2})$ admits the following presentation:
\begin{itemize}
  \item Generators: $B_{ij},\quad 1\leqslant i<j\leqslant n;\quad \rho_{k},\quad 1\leqslant k \leqslant n$.
  \item Relations:
  \begin{enumerate}[(a)]
  \item
  $B_{rs}B_{ij}B_{rs}^{-1}=\left\{
                             \begin{array}{ll}
                               B_{ij}, & {i<r<s<j} \\
                               B_{ij}^{-1}B_{rj}^{-1}B_{ij}B_{rj}B_{ij}, & {r<i=s<j} \\
                               B_{sj}^{-1}B_{ij}B_{sj}, & {i=r<s<j} \\
                               B_{sj}^{-1}B_{rj}^{-1}B_{sj}B_{rj}B_{ij}B_{rj}^{-1}B_{sj}^{-1}B_{rj}B_{sj}, & {r<i<s<j}
                             \end{array}
                           \right.$
  \item $\rho_{i}\rho_{j}\rho_{i}^{-1}=\rho_{j}^{-1}B_{ij}^{-1}\rho_{j}^{2},\quad 1\leqslant i<j\leqslant n$;
  \item $\rho_{i}^{2}=B_{1i}\cdots B_{i-1,i}B_{i,i+1}\cdots B_{in},\quad 1\leqslant i \leqslant n$;
  \item
  For $1\leqslant i<j\leqslant n,\quad 1\leqslant k \leqslant n,\quad k\neq j$,
  \\$\rho_{k}B_{ij}\rho_{k}^{-1}\left\{
                                      \begin{array}{ll}
                                        B_{ij}, & {j<k \quad or \quad k<i;} \\
                                        \rho_{j}^{-1}B_{ij}^{-1}\rho_{j}, & {k=i;} \\
                                        \rho_{j}^{-1}B_{kj}^{-1}\rho_{j}B_{kj}^{-1}B_{ij}B_{kj}\rho_{j}^{-1}B_{kj}\rho_{j}, & {i<k<j.}
                                      \end{array}
                                    \right.$
   \end{enumerate}
\end{itemize}
\end{Theorem}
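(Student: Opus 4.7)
The plan is to proceed by induction on the number $n$ of strands, combining the Fadell--Neuwirth short exact sequence with the standard presentation--of--an--extension theorem (as in Magnus--Karrass--Solitar, \S 10.2). The base case $n=1$ is immediate: $P_{1}(\mathbb{R}P^{2})=\pi_{1}(\mathbb{R}P^{2})=\langle \rho_{1}\mid \rho_{1}^{2}=1\rangle$, which matches the claimed presentation since relations (a), (b), (d) are vacuous and relation (c) reduces to $\rho_{1}^{2}=1$ (empty product on the right).

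For the inductive step, I would use, for $n\geq 2$, the Fadell--Neuwirth short exact sequence recalled in Section 2:
$$1\rightarrow \pi_{1}(\mathbb{R}P^{2}-\{\mathbf{x_{1}},\cdots,\mathbf{x_{n}}\})\xrightarrow{\delta_{\ast}} P_{n+1}(\mathbb{R}P^{2})\xrightarrow{\theta_{\ast}} P_{n}(\mathbb{R}P^{2})\rightarrow 1.$$
The fiber group admits a standard presentation with generators $B_{1,n+1},\cdots,B_{n,n+1}$ (small loops around each puncture based at $\mathbf{x_{n+1}}$) together with $\rho_{n+1}$ (a lift of the nontrivial class of $\pi_{1}(\mathbb{R}P^{2})$), subject to the single defining relation $\rho_{n+1}^{2}=B_{1,n+1}\cdots B_{n,n+1}$, which is precisely relation (c) at index $n+1$. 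Taking the inductive presentation of $P_{n}(\mathbb{R}P^{2})$ and lifting the generators $B_{ij},\rho_{k}$ ($i,j,k\leq n$) to $P_{n+1}(\mathbb{R}P^{2})$, the presentation--of--an--extension theorem produces a presentation of $P_{n+1}(\mathbb{R}P^{2})$ consisting of (i) the inductive relations, suitably lifted; (ii) the single fiber relation just described; and (iii) conjugation data recording how each lift acts on the fiber generators.

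The hard part, and the only genuinely nontrivial calculation, is the explicit determination of this conjugation action, which produces both the enlarged relations (a) and (d) involving the new strand $n+1$ and the correction terms needed to upgrade each lifted relation (c) at index $i\leq n$ to include a fresh factor $B_{i,n+1}$. The recipe is geometric: represent $B_{ij}$ and $\rho_{k}$ by concrete isotopies of $\mathbb{R}P^{2}$ fixing $\mathbf{x_{1}},\cdots,\mathbf{x_{n}}$, then push a small loop about $\mathbf{x_{i}}$ (or the loop $\rho_{n+1}$) through the isotopy and read off the result as a word in the fiber generators. The four subcases of (a) and the three subcases of (d) reflect the relative cyclic position of the strand $n+1$ with respect to the braid under consideration, while the appearance of $\rho_{j}^{-1}(\cdots)\rho_{j}$ and $\rho_{j}^{-2}$ corrections in (b) and (d) reflects the orientation reversal undergone by a loop that crosses the one-sided core of $\mathbb{R}P^{2}$. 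Once these conjugation formulas are pinned down, a Tietze transformation argument confirms that relations (a)--(d) suffice, completing the induction.
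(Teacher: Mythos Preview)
The paper does not supply its own proof of this statement: Theorem~2.3 is quoted from Gon\c{c}alves--Guaschi~\cite{ref12} and used as a black box for the computations in Lemma~2.1 and Theorem~2.4. There is therefore no proof in the present paper to compare your argument against.

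Your outline is nonetheless the standard strategy and is essentially the one carried out in~\cite{ref12}: induct on $n$ via the Fadell--Neuwirth sequence and read off the conjugation action of the lifted generators on the free fiber group. One point to tighten: as the paper itself records, the short exact sequence
\[
1\rightarrow\pi_{1}(\mathbb{R}P^{2}-\{\mathbf{x_{1}},\cdots,\mathbf{x_{n}}\})\rightarrow P_{n+1}(\mathbb{R}P^{2})\rightarrow P_{n}(\mathbb{R}P^{2})\rightarrow 1
\]
is only available for $n\geq 2$; for $n=1$ the connecting map $\pi_{2}(\mathbb{R}P^{2})\cong\mathds{Z}\to\pi_{1}(\mathbb{R}P^{2}-\{\mathbf{x_{1}}\})\cong\mathds{Z}$ is nonzero (indeed $P_{2}(\mathbb{R}P^{2})$ is the finite quaternion group, so the kernel of $\theta_{\ast}$ cannot be the infinite cyclic fiber group). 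Your base case $n=1$ therefore does not feed into the induction; you must also verify the presentation directly for $n=2$, which amounts to checking that relations (a)--(d) collapse to the quaternion presentation $\langle\rho_{1},\rho_{2}\mid\rho_{1}^{2}=\rho_{2}^{2},\ \rho_{1}^{4}=1,\ \rho_{1}\rho_{2}\rho_{1}^{-1}=\rho_{2}^{-1}\rangle$ recorded in the paper's proof of Theorem~2.4. With that adjustment your sketch is sound.
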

We view the projective plane as the quotient space of the sphere. Then the figures of $B_{ij}$ and $\rho_{i}$ are given in Figure 1, 2.
\begin{figure}[htbp]
\subfigure{
\begin{minipage}[t]{0.5\linewidth}
\centering
\includegraphics[width=1\textwidth]{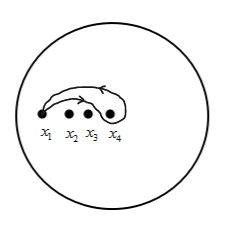}
\caption*{Figure 1}
\end{minipage}}
\subfigure{
\begin{minipage}[t]{0.5\linewidth}
\centering
\includegraphics[width=1\textwidth]{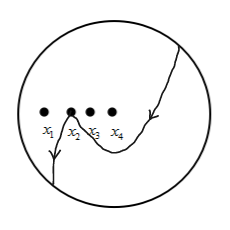}
\caption*{Figure 2}
\end{minipage}}
\end{figure}

From the Relation (b) and (d), each generator $B_{ij}$ in $P_{n}(\mathbb{R}P^{2})$ can be presented by $\rho_{i}$ and $\rho_{j}$:
$$B_{ij}=\rho_{j}\rho_{i}^{-1}\rho_{j}^{-1}\rho_{i},\quad 1\leqslant i<j\leqslant n.$$
With this presentation and Relation (b), we obtain
$$\rho_{i}\rho_{j}\rho_{i}\rho_{j}=\rho_{j}\rho_{i}\rho_{j}\rho_{i},\quad 1\leqslant i,j\leqslant n.$$

When $n=1$, $P_{1}(\mathbb{R}P^{2})=\pi_{1}(\mathbb{R}P^{2})=\mathbb{Z}_{2}$, which is an abelian group. Next we consider $n\geq2$.
\begin{Lemma}
For $n\geq2$, $\tau_{n}=\tau_{n1}\cdots\tau_{nn}$ lies in the center of $P_{n}(\mathbb{R}P^{2})$, where
$$\tau_{ni}=B_{i,i+1}B_{i,i+2}\cdots B_{in}=B_{i-1,i}^{-1}\cdots B_{1i}^{-1}\rho_{i}^{2},\quad i=1,\cdots,n.$$
\end{Lemma}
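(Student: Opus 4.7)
The plan is to show $\tau_n \in Z(P_n(\mathbb{R}P^2))$ by verifying that $\tau_n$ commutes with each generator $B_{rs}$ and each $\rho_k$ from the presentation in Theorem 2.3; since these elements generate $P_n(\mathbb{R}P^2)$, this suffices. The two equivalent forms of $\tau_{ni}$ will be used in complementary ways: the first form $\tau_{ni}=B_{i,i+1}\cdots B_{in}$ assembles $\tau_n$ as $\prod_{1\leq i<j\leq n}B_{ij}$ in lexicographic order, which is the direct analogue of the classical full twist $\Delta^2$ in Artin's braid group, while the second form $\tau_{ni}=B_{i-1,i}^{-1}\cdots B_{1i}^{-1}\rho_i^2$ is far more convenient when conjugating by a $\rho_k$.

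For commutation with $B_{rs}$, I would work with the first form and apply the four cases of Relation (a) term by term to evaluate $B_{rs}\tau_n B_{rs}^{-1}$. Each factor $B_{ij}$ gets replaced by a short word in several $B_{\cdot\cdot}$'s, and the point is that when multiplied out in the prescribed order these substitutions telescope back to $\tau_n$, exactly as in the classical proof that the full twist is central. For commutation with $\rho_k$, the second form is preferable: Relation (b) controls the $\rho_i^2$ blocks, Relation (d) controls the $B_{ji}^{-1}$ factors, and Relation (c), which reads $\rho_k^2=B_{1k}\cdots B_{k-1,k}B_{k,k+1}\cdots B_{kn}$, serves as the bridge identity between the two kinds of generators.

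The main obstacle is the substantial case analysis: Relations (a) and (d) each split into subcases according to the relative positions of the indices, so one has to carefully track which substitution applies at every step and verify that the net effect is trivial. To streamline this, one could alternatively induct on $n$ using the Fadell-Neuwirth exact sequence
\[
1 \to \pi_1(\mathbb{R}P^2 - \{x_1,\ldots,x_{n-1}\}) \to P_n(\mathbb{R}P^2) \to P_{n-1}(\mathbb{R}P^2) \to 1.
\]
The base case $n=2$ is immediate since $\tau_2=B_{12}=\rho_1^2=\rho_2^2$, and the formula $\theta_\ast(\tau_n)=\tau_{n-1}$ reduces the inductive step to checking commutation of $\tau_n$ only with the kernel generators $B_{1n},\ldots,B_{n-1,n},\rho_n$ together with fixed lifts of the $P_{n-1}(\mathbb{R}P^2)$ generators; an auxiliary argument is then required to conclude that a commutator lying in the free kernel must vanish.
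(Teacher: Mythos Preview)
Your plan is sound in outline, but it does more work than the paper's proof, and it misses the one observation that makes the paper's argument short. Immediately before the lemma the paper records that from Relations (b) and (d) one has $B_{ij}=\rho_j\rho_i^{-1}\rho_j^{-1}\rho_i$, so the $\rho_k$ alone generate $P_n(\mathbb{R}P^2)$. Hence the paper never checks commutation of $\tau_n$ with the $B_{rs}$ at all; your entire first block of work, the case analysis through Relation (a) in the style of the classical full twist, is unnecessary here.

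For commutation with $\rho_k$ the paper's route is also more concrete than what you sketch. Using the expression $B_{ik}=\rho_k\rho_i^{-1}\rho_k^{-1}\rho_i$ together with the symmetry $\rho_i\rho_j\rho_i\rho_j=\rho_j\rho_i\rho_j\rho_i$ (which the paper derives just before the lemma), one gets the clean identity $B_{ik}^{-1}\rho_k B_{ik}=\rho_i^{-2}\rho_k\rho_i^{2}$. Feeding this into the second form $\tau_{nk}=B_{k-1,k}^{-1}\cdots B_{1k}^{-1}\rho_k^{2}$ yields $\tau_{nk}\rho_k\tau_{nk}^{-1}=\rho_1^{-2}\cdots\rho_{k-1}^{-2}\rho_k\rho_{k-1}^{2}\cdots\rho_1^{2}$, while Relation (d) gives $\tau_{ni}\rho_k=\rho_k\tau_{ni}$ for $k<i$. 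The remaining telescoping to $\tau_n\rho_k=\rho_k\tau_n$ is then a short chain of substitutions. Your description (``Relation (b) controls the $\rho_i^2$ blocks, Relation (d) controls the $B_{ji}^{-1}$ factors'') points in roughly the right direction but does not isolate this key conjugation identity, which is what actually drives the computation.

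Regarding your alternative Fadell--Neuwirth induction: the paper does use that exact sequence, but only in the \emph{next} result (Theorem~2.4) to show that the center is no larger than $\langle\tau_n\rangle$; it is not used to prove centrality of $\tau_n$. Your sketch of the inductive step also leaves a real gap: knowing that the commutator $[\tau_n,g]$ lies in the free kernel does not by itself force it to be trivial, and you acknowledge this without resolving it. The direct computation is both shorter and complete.
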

\begin{proof}
According to relation (d), $\tau_{ni}\rho_{k}=\rho_{k}\tau_{ni}$ for $k<i$. For any $i\neq k$, we have
\begin{align*}
B_{ik}^{-1}\rho_{k}B_{ik}&=(\rho_{i}^{-1}\rho_{k}\rho_{i}\rho_{k}^{-1})\rho_{k}(\rho_{k}\rho_{i}^{-1}\rho_{k}^{-1}\rho_{i})\\
&=\rho_{i}^{-1}(\rho_{k}\rho_{i}\rho_{k}\rho_{i}^{-1})\rho_{k}^{-1}\rho_{i}\\
&=\rho_{i}^{-1}(\rho_{i}^{-1}\rho_{k}\rho_{i})\rho_{k}\rho_{k}^{-1}\rho_{i}\\
&=\rho_{i}^{-2}\rho_{k}\rho_{i}^{2}.
\end{align*}
Thus we obtain
\begin{align*}
\tau_{nk}\rho_{k}\tau_{nk}^{-1}&=B_{k-1,k}^{-1}\cdots B_{1k}^{-1}\rho_{k}\cdots B_{1k}\cdots B_{k-1,k}\\
&=\rho_{1}^{-2}\cdots\rho_{k-1}^{-2}\rho_{k}\rho_{k-1}^{2}\cdots\rho_{1}^{2}.
\end{align*}
From the above results, we obtain
\begin{align*}
\tau_{n}\rho_{k}&=\tau_{n1}\cdots\tau_{nn}\rho_{k}\\
&=\tau_{n1}\cdots\tau_{n,k-1}\rho_{1}^{-2}\cdots\rho_{k-1}^{-2}\rho_{k}\rho_{k-1}^{2}\cdots\rho_{1}^{2}\tau_{nk}\cdots\tau_{nn}\\
&=(\tau_{n2}\rho_{2}^{-2})\cdots(\tau_{n,k-1}\rho_{k-1}^{-2})\rho_{k}\rho_{k-1}^{2}\cdots\rho_{1}^{2}\tau_{nk}\cdots\tau_{nn}\\
&=B_{12}^{-1}(B_{23}^{-1}B_{13}^{-1})\cdots(B_{k-2,k-1}^{-1}\cdots B_{1,k-1}^{-1})\rho_{k}\rho_{k-1}^{2}\cdots\rho_{1}^{2}\tau_{nk}\cdots\tau_{nn}\\
&=\rho_{k}B_{12}^{-1}(B_{23}^{-1}B_{13}^{-1})\cdots(B_{k-2,k-1}^{-1}\cdots B_{1,k-1}^{-1})\rho_{k-1}^{2}\cdots\rho_{1}^{2}\tau_{nk}\cdots\tau_{nn}\\
&=\rho_{k}B_{12}^{-1}(B_{23}^{-1}B_{13}^{-1})\cdots(B_{k-3,k-2}^{-1}\cdots B_{1,k-2}^{-1})\tau_{n,k-1}\rho_{k-2}^{2}\cdots\rho_{1}^{2}\tau_{nk}\cdots\tau_{nn}\\
&=\rho_{k}B_{12}^{-1}(B_{23}^{-1}B_{13}^{-1})\cdots(B_{k-3,k-2}^{-1}\cdots B_{1,k-2}^{-1})\rho_{k-2}^{2}\cdots\rho_{1}^{2}\tau_{n,k-1}\cdots\tau_{nn}\\
&=\rho_{k}\tau_{n}.
\end{align*}
\end{proof}
\begin{Theorem}
For $n\geq2$, the center of $P_{n}(\mathbb{R}P^{2})$ is generated by $\tau_{n}$.
\end{Theorem}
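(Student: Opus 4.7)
The plan is to proceed by induction on $n$, using the Fadell--Neuwirth short exact sequence
$$1 \rightarrow \pi_{1}(\mathbb{R}P^{2}-\{\mathbf{x_{1}},\cdots,\mathbf{x_{n}}\}) \xrightarrow{\delta_{\ast}} P_{n+1}(\mathbb{R}P^{2}) \xrightarrow{\theta_{\ast}} P_{n}(\mathbb{R}P^{2}) \rightarrow 1,$$
which was recorded in Section 2 as being valid for $n\geq 2$. Two ingredients drive the induction: $\tau_{n}$ is already known to be central by Lemma 2.1, and $\pi_{1}(\mathbb{R}P^{2}-\{\mathbf{x_{1}},\cdots,\mathbf{x_{n}}\})$ is a free group of rank $n$, hence for $n\geq 2$ non-abelian with trivial center.

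For the base case $n=2$, I would read off the presentation of $P_{2}(\mathbb{R}P^{2})$ directly from Theorem 2.3. With only the generators $B_{12}, \rho_{1}, \rho_{2}$ and the relations $\rho_{1}^{2}=\rho_{2}^{2}=B_{12}$ together with relation (d) in the case $k=i=1$, $j=2$, one quickly deduces $B_{12}^{2}=1$ and $\rho_{2}\rho_{1}\rho_{2}^{-1}=\rho_{1}^{-1}$. This identifies $P_{2}(\mathbb{R}P^{2})$ with the quaternion group $Q_{8}$, whose center is the order-two subgroup $\{1,B_{12}\}=\langle \tau_{2}\rangle$.

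For the inductive step, assume $Z(P_{n}(\mathbb{R}P^{2}))=\langle \tau_{n}\rangle$ with $n\geq 2$, and let $z\in Z(P_{n+1}(\mathbb{R}P^{2}))$. Since $\theta_{\ast}$ is surjective, $\theta_{\ast}(z)\in Z(P_{n}(\mathbb{R}P^{2}))=\langle \tau_{n}\rangle$, so $\theta_{\ast}(z)=\tau_{n}^{k}$ for some integer $k$. Because the strand-forgetting map sends each $B_{i,n+1}$ to $1$ and fixes the remaining $B_{ij}$, a short direct check on the formula $\tau_{n+1}=\prod_{i}\tau_{n+1,i}$ yields $\theta_{\ast}(\tau_{n+1})=\tau_{n}$; hence $z\tau_{n+1}^{-k}\in\ker\theta_{\ast}$. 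As $\tau_{n+1}$ is central by Lemma 2.1, the element $z\tau_{n+1}^{-k}$ is itself central in $P_{n+1}(\mathbb{R}P^{2})$, and therefore centralises the subgroup $\ker\theta_{\ast}$. Since this kernel is a non-abelian free group of rank $n\geq 2$, its center is trivial, forcing $z=\tau_{n+1}^{k}\in\langle\tau_{n+1}\rangle$ and completing the induction.

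The main obstacle I anticipate is the base case: the crucial identity $B_{12}^{2}=1$ that collapses $P_{2}(\mathbb{R}P^{2})$ to $Q_{8}$ is a torsion phenomenon specific to $\mathbb{R}P^{2}$, with no analogue on the surfaces $N_{k}$ for $k\geq 2$ or on surfaces of negative Euler characteristic, so the manipulations among relations (a)--(d) of Theorem 2.3 must be carried out carefully. Once $P_{2}(\mathbb{R}P^{2})$ has been identified with $Q_{8}$, the center computation is immediate, and the inductive step is essentially formal, invoking only the centrality of $\tau_{n+1}$, compatibility of $\tau$ with $\theta_{\ast}$, and the triviality of the center of a non-abelian free group.
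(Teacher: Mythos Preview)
Your proposal is correct and follows essentially the same route as the paper: induction on $n$ with the base case $P_{2}(\mathbb{R}P^{2})\cong Q_{8}$, and the inductive step via the Fadell--Neuwirth sequence using $\theta_{\ast}(\tau_{n+1})=\tau_{n}$ together with the triviality of the center of the non-abelian free kernel. If anything, your write-up of the inductive step is more explicit than the paper's, which compresses the argument into a single sentence.
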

\begin{proof}
We apply an induction on the number $n$ of strands. For $n=2,P_{2}(\mathbb{R}P^{2})$ is the quaternion group with the presentation
$$<\rho_{1},\rho_{2}|\rho_{1}^{2}=\rho_{2}^{2},\rho_{1}^{4}=1,\rho_{1}\rho_{2}\rho_{1}^{-1}=\rho_{2}^{-1}>.$$
Then $Z(P_{2}(\mathbb{R}P^{2}))=<\rho_{1}^{2}=\rho_{2}^{2}|\rho_{1}^{4}=1>$, where $\tau_{21}=B_{12}=\rho_{1}^{2}$ and $\tau_{22}=B_{12}^{-1}\rho_{2}^{2}=1$.
The Fadell-Neuwirth fibration gives us the exact sequence.
$$1\rightarrow\pi_{1}(\mathbb{R}P^{2}-\{\mathbf{x_{1}},\cdots,\mathbf{x_{n}}\})\xrightarrow{\delta_{\ast}} P_{n+1}(\mathbb{R}P^{2})\xrightarrow{\theta_{\ast}} P_{n}(\mathbb{R}P^{2})\rightarrow 1$$
where $\pi_{1}(\mathbb{R}P^{2}-\{\mathbf{x_{1}},\cdots,\mathbf{x_{n}}\})$ is a free group, for $n\geq 2$. Suppose the center of $P_{n}(\mathbb{R}P^{2})$ is generated by $\tau_{n}$. Since $\theta_{\ast}$ is surjective and $\theta_{\ast}(\tau_{n+1})=\tau_{n}$, $Z(P_{n+1}(\mathbb{R}P^{2}))$ is generated by $\tau_{n+1}$ and generators of $\ker \theta_{\ast}=im\delta_{\ast}$, which is a free group. Thus the center of $P_{n+1}(\mathbb{R}P^{2})$ is just generated by $\tau_{n+1}$.
\end{proof}
$\tau_{n}$ can be represented by $h(t)$ for $t\in[0,1]$:
$$h(t)=(h_{1}(t),\cdots,h_{n}(t)).$$
The components of $h(t)$ are plotted in Figure 3.
\begin{figure}[h]
  \centering
  \includegraphics[]{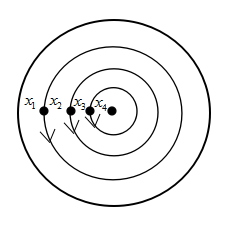}\\
  \caption*{Figure 3}
\end{figure}
\section{The relationship between orbit braid groups and equivariant mapping class groups on compact closed surfaces}
Let $M$ be a compact closed surface and $G$ be a discrete group acting on $M$ freely and properly. Since $M$ is compact, $G$ is finite. Let $\mathbf{x_{1}},\cdots,\mathbf{x_{n}}$ denote n fixed but arbitrarily chosen points on $M$, satisfying $G\mathbf{x_{i}}\bigcap G\mathbf{x_{j}}=\emptyset$,for $1\leq i\neq j \leq n$.
\begin{itemize}
  \item Denote $\mathcal{F}_{n}^{G}M$ as the group of all $G$-homemorphisms $h:M\rightarrow M$ which satisfy $h(G\mathbf{x_{i}})=G\mathbf{x_{i}}$, for each $i$ and preserve orientation if $M$ is oriented.
  \item Denote $\mathcal{B}_{n}^{G}M$ as the group of all $G$-homemorphisms $h:M\rightarrow M$ which satisfy  $h(G\{\mathbf{x_{1}},\cdots,\mathbf{x_{n}}\})=G\{\mathbf{x_{1}},\cdots,\mathbf{x_{n}}\}$ and preserve orientation if $M$ is oriented.
\end{itemize}
These two groups are to be endowed with compact-open topology. We define equivariant mapping class groups as follows:
\begin{itemize}
  \item Denote $\pi_{0}(\mathcal{F}_{n}^{G}M,id)$ as the group of all path connected components of $\mathcal{F}_{n}^{G}M$.
  \item Denote $Mod_{G}(M,n)$ as the group of all path connected components of $\mathcal{B}_{n}^{G}M$ and $Mod_{G}(M,n)$ is called the $n$-th $G$-equivariant mapping class group of $M$.
\end{itemize}
\begin{Remark}
The group structures on $\pi_{0}(\mathcal{F}_{n}^{G}M,id)$ and $Mod_{G}(M,n)$ inherit the group structures on $\mathcal{F}_{n}^{G}M$ and $\mathcal{B}_{n}^{G}M$.
\end{Remark}
Since the action of $G$ on $M$ is free, the (pure) orbit braid group of $M$ is isomorphic to the (pure) braid group of the quotient space~\cite{ref16}, which is an oriented or nonorientable closed surface. Hence we only need to consider the (pure) braid group of the quotient space. Smilar to the situation without group actions, we define the pure evaluation map as follows:
\begin{Definition}
\begin{align*}
\varepsilon^{G}:\mathcal{F}_{0}^{G}M&\rightarrow F(M/G,n)\\
f&\mapsto([f(\mathbf{x_{1}})],\cdots,[f(\mathbf{x_{n}})]).
\end{align*}
\end{Definition}
Observe that we endow $\mathcal{F}_{0}^{G}M$ with compact-open topology and $F(M/G,n)$ with subspace topology of $M\times\cdots\times M$. Then $\varepsilon^{G}$ is continuous.
\begin{Lemma}
The pure evaluation map $\varepsilon^{G}$ has a local cross section.
\end{Lemma}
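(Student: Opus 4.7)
My plan is to imitate the classical (non-equivariant) construction of a local section for the evaluation map (via compactly supported isotopies in coordinate charts), and then make the construction $G$-equivariant by using that $p:M\to M/G$ is a covering map (since the action is free and proper and $G$ is finite). Concretely, fix $f_{0}\in\mathcal{F}_{0}^{G}M$ and set $y_{i}=f_{0}(\mathbf{x_{i}})$, $\bar y_{i}=[y_{i}]\in M/G$. Because the $\mathbf{x_{i}}$ lie in distinct orbits and $f_{0}$ is $G$-equivariant, the orbits $Gy_{1},\dots,Gy_{n}$ are pairwise disjoint. I would then choose small open disks $U_{i}\subset M$ around $y_{i}$ such that (i) $p|_{U_{i}}$ is a homeomorphism onto $W_{i}:=p(U_{i})$, (ii) the translates $\{gU_{i}:g\in G\}$ are pairwise disjoint, and (iii) the sets $G\overline{U_{i}}$ are pairwise disjoint. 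All three conditions are achievable simultaneously because the action is free and proper and the $Gy_{i}$ are disjoint compact sets.

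Let $V=\{(z_{1},\dots,z_{n})\in F(M/G,n):z_{i}\in W_{i}\}$, an open neighborhood of $\varepsilon^{G}(f_{0})=(\bar y_{1},\dots,\bar y_{n})$. For $(z_{1},\dots,z_{n})\in V$, let $\widetilde z_{i}\in U_{i}$ be the unique lift of $z_{i}$. I would then construct a homeomorphism $h=h_{(z_{1},\dots,z_{n})}$ of $M$ as follows: pick once and for all a continuous family of compactly supported homeomorphisms $\phi_{i,w}:U_{i}\to U_{i}$ indexed by $w\in U_{i}$ (close to $y_{i}$), with $\phi_{i,w}(y_{i})=w$ and $\phi_{i,y_{i}}=\mathrm{id}$, obtained by a standard bump-function flow — this is the very recipe used in the non-equivariant case. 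Set $h=\phi_{i,\widetilde z_{i}}$ on $U_{i}$, $h=g\circ\phi_{i,\widetilde z_{i}}\circ g^{-1}$ on $gU_{i}$ for $g\in G\setminus\{e\}$, and $h=\mathrm{id}$ on the complement of $\bigcup_{g,i}gU_{i}$. Because $\phi_{i,\widetilde z_{i}}$ has compact support inside $U_{i}$ and the $gU_{i}$ are disjoint with disjoint closures, the pieces glue to a genuine self-homeomorphism of $M$ that is $G$-equivariant by construction, orientation-preserving if $M$ is oriented (since each $\phi_{i,w}$ is isotopic to the identity), and sends $y_{i}$ to $\widetilde z_{i}$. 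Finally I define
\begin{equation*}
s:V\longrightarrow\mathcal{F}_{0}^{G}M,\qquad s(z_{1},\dots,z_{n})=h_{(z_{1},\dots,z_{n})}\circ f_{0}.
\end{equation*}
Then $s(z_{1},\dots,z_{n})(\mathbf{x_{i}})=h(y_{i})=\widetilde z_{i}$, so $\varepsilon^{G}\circ s=\mathrm{id}_{V}$, and $s(\bar y_{1},\dots,\bar y_{n})=f_{0}$.

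The step that requires the most care is checking that $s$ is continuous into the compact-open topology. This reduces to verifying that the family $\{h_{(z_{1},\dots,z_{n})}\}$ depends continuously on the parameters, which in turn follows from the continuous dependence of the local flow $\phi_{i,w}$ on $w$ together with the fact that the $G$-equivariant extension is given by conjugation by elements of a finite group $G$ — hence a finite (and continuous) assembly. The main obstacle I anticipate is not any deep topological difficulty but the bookkeeping needed to guarantee that the trivializing disks $U_{i}$ can be chosen small enough that the three disjointness conditions hold simultaneously and that $\phi_{i,\widetilde z_{i}}$ genuinely stays compactly supported inside $U_{i}$ uniformly as $(z_{1},\dots,z_{n})$ ranges over a possibly smaller neighborhood $V'\subset V$ on which the section is defined; this is handled by shrinking $V$ if necessary, so the lemma follows.
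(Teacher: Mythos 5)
Your proposal is correct and follows essentially the same route as the paper: choose disjoint neighborhoods $U_{i}$ of representatives whose $G$-translates are pairwise disjoint, build compactly supported homeomorphisms moving the marked points within the $U_{i}$, extend equivariantly by the group action and by the identity elsewhere, and take this assignment as the section. Your version is slightly more explicit than the paper's (you spell out the bump-function construction, the conjugation on the translates $gU_{i}$, the continuity into the compact-open topology, and the composition with $f_{0}$ so that $\varepsilon^{G}\circ s=\mathrm{id}$ holds on the nose), but these are refinements of the same argument rather than a different approach.
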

\begin{proof}
Let $\boldsymbol{a}=([\boldsymbol{a}_{1}],\cdots,[\boldsymbol{a}_{n}])$ be an arbitrary point in $F(M/G,n)$ and $\boldsymbol{a}_{i}$ be the representative of each coordinate, for $i=1,\cdots,n$. Since the action of $G$ on $M$ is free and proper, there exist disjoint open sets $\{U_{i}:\boldsymbol{a}_{i} \in U_{i},i=1,\cdots,n\}$ such that $U_{i}\bigcap gU_{j}=\emptyset$ for every $g\neq e\in G$. We consider an open neighborhood $U({a})$ of the point $\boldsymbol{a}\in F(M/G,n)$ where $U(\boldsymbol{a})$ is defined by
$$U(\boldsymbol{a})=\{([\mathbf{u_{1}}],\cdots,[\mathbf{u_{n}}]):\mathbf{u_{i}}\in U_{i}\}.$$
Let $\mathbf{u}=([\mathbf{u_{1}}],\cdots,[\mathbf{u_{n}}])$ be an arbitrary point in $U({a})$. Choose $\lambda_{\mathbf{u}}$ to be a $G$-equivariant homeomorphism from $M$ to $M$ such that
\begin{enumerate}[(\romannumeral1)]
  \item $\lambda_{\mathbf{u}}(\mathbf{x})=\mathbf{x}$, when $\mathbf{x}\notin\bigcup\limits_{g\in G}\bigcup\limits_{i=1}^{n}gU_{i}$.
  \item $\lambda_{\mathbf{u}}$ maps each $gU_{i}$ into itself homeomorphically;
  \item $\lambda_{\mathbf{u}}$ fixes the points on the boundary of $gU_{i}$;
  \item $\lambda_{\mathbf{u}}(\boldsymbol{a}_{i})$ and $\mathbf{\mathbf{u_{i}}}$ are in the same orbit.
\end{enumerate}
Then the map
$$\chi:U(\boldsymbol{a})\rightarrow \mathcal{F}_{0}^{G}M$$
defined by $\chi(\mathbf{u})=\lambda_{\mathbf{u}}$ is the required cross section.
\end{proof}
\begin{Lemma}
The pure evaluation map $\varepsilon^{G}$ is a locally trivial fiber bundle with fiber $\mathcal{F}_{n}^{G}M$.
\end{Lemma}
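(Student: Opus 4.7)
The plan is to upgrade the local cross section produced in Lemma 3.2 into a local product structure in the usual way (the same recipe that turns the Fadell--Neuwirth local section argument into a locally trivial fibration). Fix a point $\boldsymbol{a} = ([\boldsymbol{a}_{1}],\dots,[\boldsymbol{a}_{n}]) \in F(M/G,n)$, let $U(\boldsymbol{a})$ and $\chi\colon U(\boldsymbol{a}) \to \mathcal{F}_{0}^{G}M$ be the neighborhood and section from the previous lemma, and write $F_{\boldsymbol{a}} = (\varepsilon^{G})^{-1}(\boldsymbol{a})$. I will produce a homeomorphism
\[
\Phi\colon U(\boldsymbol{a}) \times F_{\boldsymbol{a}} \longrightarrow (\varepsilon^{G})^{-1}(U(\boldsymbol{a})),\qquad \Phi(\mathbf{u},f) = \chi(\mathbf{u})\circ f,
\]
commuting with the projection to $U(\boldsymbol{a})$. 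To see $\Phi$ lands in the right set, observe that for $f \in F_{\boldsymbol{a}}$ one has $f(\mathbf{x}_{i}) \in G\boldsymbol{a}_{i}$; writing $f(\mathbf{x}_{i}) = g_{i}\boldsymbol{a}_{i}$ and using the $G$-equivariance of $\chi(\mathbf{u})$, together with condition (iv) from Lemma 3.2, gives $[\chi(\mathbf{u})(f(\mathbf{x}_{i}))] = [\chi(\mathbf{u})(\boldsymbol{a}_{i})] = [\mathbf{u}_{i}]$, hence $\varepsilon^{G}(\Phi(\mathbf{u},f)) = \mathbf{u}$.

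Next I would construct the inverse explicitly as $\Psi(h) = (\varepsilon^{G}(h),\, \chi(\varepsilon^{G}(h))^{-1}\circ h)$. The same equivariance plus condition (iv) calculation, run in reverse, shows that $\chi(\mathbf{u})^{-1}\circ h$ sends each $\mathbf{x}_{i}$ into $G\boldsymbol{a}_{i}$, so $\Psi(h)\in U(\boldsymbol{a}) \times F_{\boldsymbol{a}}$. Checking $\Phi\circ\Psi = \mathrm{id}$ and $\Psi\circ\Phi = \mathrm{id}$ is just composition bookkeeping. Continuity of $\Phi$ follows from continuity of $\chi$ together with continuity of composition $\mathcal{F}_{0}^{G}M \times \mathcal{F}_{0}^{G}M \to \mathcal{F}_{0}^{G}M$ in the compact-open topology (valid because $M$ is compact Hausdorff, hence locally compact Hausdorff). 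Continuity of $\Psi$ uses additionally that inversion is continuous in the homeomorphism group of a compact Hausdorff space, so $\mathbf{u} \mapsto \chi(\mathbf{u})^{-1}$ is continuous.

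Finally I would identify the abstract fiber with $\mathcal{F}_{n}^{G}M$. Picking the distinguished basepoint $\boldsymbol{a}^{0} = ([\mathbf{x}_{1}],\dots,[\mathbf{x}_{n}])$, the fiber $F_{\boldsymbol{a}^{0}} = \{h \in \mathcal{F}_{0}^{G}M : h(\mathbf{x}_{i}) \in G\mathbf{x}_{i}\text{ for all }i\}$ coincides, by $G$-equivariance, with the set of $G$-homeomorphisms $h$ satisfying $h(G\mathbf{x}_{i}) = G\mathbf{x}_{i}$, which is exactly $\mathcal{F}_{n}^{G}M$. For general $\boldsymbol{a}$, the map $f \mapsto \chi(\boldsymbol{a}^{0})^{-1}\circ\chi(\boldsymbol{a})\circ f$ (or any homeomorphism of $M$ moving the chosen representatives into the standard ones) identifies $F_{\boldsymbol{a}}$ with $\mathcal{F}_{n}^{G}M$, so the local trivialization has the claimed fiber.

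I expect the one genuine subtlety to be step two, namely verifying that condition (iv) of Lemma 3.2, which only asserts that $\chi(\mathbf{u})(\boldsymbol{a}_{i})$ and $\mathbf{u}_{i}$ lie in a common $G$-orbit, is strong enough to make both $\Phi$ and $\Psi$ land where claimed; this is where the free and proper action is essential, since it lets one replace orbit-level equalities by element-level equalities of the form $\chi(\mathbf{u})(\boldsymbol{a}_{i}) = g(\mathbf{u})_{i}\mathbf{u}_{i}$ unambiguously. Continuity in the compact-open topology of groups of equivariant homeomorphisms is otherwise standard.
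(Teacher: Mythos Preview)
Your argument is correct, but it takes a more hands-on route than the paper. The paper does not build the local trivialization explicitly; instead it observes that $\mathcal{F}_{n}^{G}M$ is a closed subgroup of $\mathcal{F}_{0}^{G}M$, identifies the coset space $\mathcal{F}_{0}^{G}M/\mathcal{F}_{n}^{G}M$ with $F(M/G,n)$ (which is paracompact), and then appeals to Steenrod's general theorem that a quotient map of a topological group by a closed subgroup, admitting a local cross section, is a locally trivial bundle with fiber the subgroup. The local section from the previous lemma is exactly the input that theorem needs.

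Your construction is essentially the proof of that general theorem specialized to this situation: the formula $\Phi(\mathbf{u},f)=\chi(\mathbf{u})\circ f$ with inverse $\Psi(h)=(\varepsilon^{G}(h),\chi(\varepsilon^{G}(h))^{-1}\circ h)$ is precisely how one turns a local section of $G\to G/H$ into a local product structure. What you gain is self-containment (no external citation needed) and an explicit check, via condition (iv) and equivariance, that everything lands in the right fiber; what the paper's approach buys is brevity, since the identification $\mathcal{F}_{0}^{G}M/\mathcal{F}_{n}^{G}M\cong F(M/G,n)$ plus the Steenrod reference handles the trivialization, the fiber identification, and the continuity questions all at once. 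One minor note: what you call ``Lemma~3.2'' and its condition (iv) is the local cross section lemma, which in the paper's numbering is Lemma~3.1.
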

\begin{proof}
$\mathcal{F}_{n}^{G}M$ is a closed subgroup of $\mathcal{F}_{0}^{G}M$ and $F(M/G,n)$ is a paracompact space. Thus we only need to show that the spaces $\mathcal{F}_{0}^{G}M/\mathcal{F}_{n}^{G}M$ and $F(M/G,n)$ are homeomorphic~\cite{ref18}. If $h(\mathbf{x})$ and $h'(\mathbf{x})$ belong to the same right coset of $\mathcal{F}_{0}^{G}M$ in $\mathcal{F}_{n}^{G}M$, they must have the property $Gh(\mathbf{x_{i}})=Gh'(\mathbf{x_{i}})$ for $i=1,\cdots,n$. On the one hand, each point $[h]\in \mathcal{F}_{0}^{G}M/\mathcal{F}_{n}^{G}M$ can be associated in a unique manner with a single point $([h(\mathbf{x_{1}})],\cdots,[h(\mathbf{x_{n}})])\in F(M/G,n)$. On the other hand, for each point
$$([\boldsymbol{a}_{1}],\cdots,[\boldsymbol{a}_{n}])\in F(M/G,n)$$
there is a $G$-equivariant homeomorphism $h\in\mathcal{F}_{0}^{G}M$ such that $h(G{a}_{i})=Gx_{i}$ for $i=1,\cdots,n$. Finally, it can be established that the topology of $\mathcal{F}_{0}^{G}M/\mathcal{F}_{n}^{G}M$ coincides with the topology of $F(M/G,n)$.
\end{proof}
Using Lemma 3.2, we obtain an exact sequence as follows:
\begin{equation}
\cdots\rightarrow \pi_{1}(\mathcal{F}_{0}^{G}M)\xrightarrow {\varepsilon_{\ast}^{G}} P_{n}(M/G)\xrightarrow{d_{\ast}^{G}}\pi_{0}(\mathcal{F}_{n}^{G}M)\xrightarrow{i_{\ast}^{G}}\pi_{0}(\mathcal{F}_{0}^{G}M)\rightarrow\pi_{0}(F(M/G,n)=1
\end{equation}
\begin{Lemma}
$\ker d_{\ast}^{G}\subset Z(P_{n}(M/G))$.
\end{Lemma}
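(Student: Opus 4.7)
The plan is to translate the classical Birman-style argument into the equivariant setting, exploiting $G$-equivariance at the step where the naive argument would only give equality up to a $G$-action. By exactness of the sequence preceding the statement, $\ker d_*^G = \operatorname{im}\varepsilon_*^G$, so it suffices to show that every element coming from a loop in $\mathcal{F}_0^G M$ based at $id$ commutes with every pure orbit braid in $P_n(M/G)$.

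So fix a loop $\gamma:I\to \mathcal{F}_0^G M$ with $\gamma_0=\gamma_1=id$, and fix a pure braid $[\alpha]\in P_n(M/G)$ represented by a loop $\alpha:I\to F(M/G,n)$ at $([\mathbf{x_1}],\dots,[\mathbf{x_n}])$. Choose lifts $\alpha_i:I\to M$ with $\alpha_i(0)=\mathbf{x_i}$; by definition of a pure braid, $\alpha_i(1)=g_i\mathbf{x_i}$ for some $g_i\in G$. Define
\[
\Phi:I\times I\to F(M/G,n),\qquad \Phi(s,t)=\bigl([\gamma_s(\alpha_1(t))],\dots,[\gamma_s(\alpha_n(t))]\bigr).
\]
I would first check that $\Phi$ lands in $F(M/G,n)$: this follows because $\gamma_s$ is a $G$-homeomorphism, so it descends to a homeomorphism of $M/G$, which sends the $n$-tuple of distinct orbits $([\alpha_1(t)],\dots,[\alpha_n(t)])$ to an $n$-tuple of distinct orbits.

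The key point, and the only place where equivariance is essential, is the computation of $\Phi$ on the top edge $t=1$: using $\gamma_s(g_i\mathbf{x_i})=g_i\gamma_s(\mathbf{x_i})$ we get $[\gamma_s(\alpha_i(1))]=[\gamma_s(\mathbf{x_i})]$, so $\Phi(s,1)=\varepsilon^G(\gamma_s)$. Combined with $\Phi(s,0)=\varepsilon^G(\gamma_s)$, $\Phi(0,t)=\alpha(t)$, and $\Phi(1,t)=\alpha(t)$, the four sides of the filled square $\Phi$ assemble into a null-homotopy of the commutator $\varepsilon_*^G(\gamma)\cdot\alpha\cdot\varepsilon_*^G(\gamma)^{-1}\cdot\alpha^{-1}$ in $\pi_1(F(M/G,n))=P_n(M/G)$.

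The main obstacle is precisely verifying that the top edge of the square reduces back to $\varepsilon^G(\gamma_s)$; without $G$-equivariance one would only obtain $\Phi(s,1)=(g_1\cdot[\gamma_s(\mathbf{x_1})],\dots,g_n\cdot[\gamma_s(\mathbf{x_n})])$ in $F(M,n)$, which is not a priori the same point. Working in $F(M/G,n)$ and using $G$-equivariance of each $\gamma_s$ collapses this ambiguity. Hence $\operatorname{im}\varepsilon_*^G\subset Z(P_n(M/G))$, and therefore $\ker d_*^G\subset Z(P_n(M/G))$.
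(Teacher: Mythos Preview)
Your proof is correct and follows the same Birman-style square argument as the paper. The only cosmetic difference is that the paper first descends each $H_t$ to a homeomorphism of $M/G$ (using $G$-equivariance) and then works with the $\beta_i$ as genuine loops in $M/G$, so that the top and bottom edges of the square coincide automatically; you instead lift the braid to $M$ and invoke equivariance at the endpoint $\alpha_i(1)=g_i\mathbf{x_i}$, which is an equivalent use of the hypothesis.
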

\begin{proof}
For any element $\alpha\in \ker d_{\ast}^{G}=im\varepsilon_{\ast}^{G}$, there exists an $H\in \pi_{1}(\mathcal{F}_{0}^{G}M)$ such that $\varepsilon_{\ast}^{G}(H)=\alpha$. Now, $H$ can be represented by some loop $H_{t}:M\rightarrow M$ where each $H_{t}$ is in $\mathcal{F}_{0}^{G}M$ and $H_{0}$ and $H_{1}$ are both identity maps. Then $\alpha$ is represented by $\varepsilon_{\ast}^{G}(H)$ such that
$$\varepsilon_{\ast}^{G}(H)(t)=([H_{t}(\mathbf{x_{1}})],\cdots,[H_{t}(\mathbf{x_{n}})]).$$
Moreover,each $H_{t}$ can induce the unique map from $M/G$ to $M/G$, since it preserves $G$ -action. There is no harm in denoting the induced map by $H_{t}$. Choose any element $\beta\in P_{n}(M/G)$ which can be represented by a loop $(\beta_{1},\cdots,\beta_{n})$ with $\beta_{i}(0)=\beta_{i}(1)=x_{i}$ for each $i=1,\cdots,n$. Use $H_{t}$ and $\beta$ to construct $\psi:I^{2}\rightarrow F_{n}(M/G)$ by
$$\psi(s,t)=([H_{t}(\beta_{1}(s)],\cdots,[H_{t}(\beta_{n}(s)]).$$
Thus, $H(0,t)=H(1,t)$ represents $\alpha$, while $H(s,0)=H(s,1)$ represents $\beta$. So $H_{|\partial I^{2}}$ represents $\alpha\beta\alpha^{-1}\beta^{-1}$. Therefore,
$$\alpha\beta\alpha^{-1}\beta^{-1}=1.$$
Since $\beta\in P_{n}(M/G)$ is arbitrary, it follows
$$\alpha\in Z(P_{n}(M/G)).$$
\end{proof}
Suppose $G$ is a finite group of order $l$, acting freely and properly on $M$. The natural projection map $p:M\rightarrow M/G$ is a regular covering map with the exact sequence
$$1 \rightarrow\pi_{1}(M)\xrightarrow{p_{\ast}}\pi_{1}(M/G)\rightarrow G\rightarrow 1$$
and $G$ is naturally isomorphic to the group of covering transformation. Furthermore, $M/G$  is a closed compact surface whose Eular characteristic $\chi(M/G)$ satisfies the formula
\begin{equation}
l\chi(M/G)=\chi(M).
\end{equation}
If $M$ is the oriented surface $S_{g}$,
$$M/G\cong S_{\frac{g-1}{l}+1} \quad or \quad N_{\frac{2(g-1)}{l}+2}\quad for \quad g\geq1.$$
If $M$ is the nonorientable surface $N_{k}$,
$$M/G\cong S_{\frac{k-2}{2l}+1} \quad or \quad N_{\frac{k-2}{l}+2} \quad for \quad k\geq2.$$
When $M/G\cong S_{g},\quad g\geq2$ or $M/G\cong N_{k},\quad k\geq2$, $M$ can be $S_{g},\quad g\geq 1$ or $N_{k},\quad k\geq 2$ and we have
$$\ker d_{\ast}^{G}\subset Z(P_{n}(M/G))=1.$$
We conclude
\begin{Theorem}
Let $i_{\ast}^{G}:\pi_{0}(\mathcal{F}_{n}^{G}M)\rightarrow\pi_{0}(\mathcal{F}_{0}^{G}M)$ be the homomorphism induced by inclusion $\mathcal{F}_{n}^{G}M\subset \mathcal{F}_{0}^{G}M$.Then
\begin{align*}
\ker i_{\ast}^{G}&\cong P_{n}(M/G),\quad if\quad M/G \quad is \quad S_{g},\quad g\geq2 \quad or \quad N_{k},\quad k\geq2.
\end{align*}
\end{Theorem}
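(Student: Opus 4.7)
The plan is to extract the isomorphism directly from the long exact sequence (1), identify $\ker i_{\ast}^{G}$ with $P_{n}(M/G)/\ker d_{\ast}^{G}$ via the first isomorphism theorem, and then kill $\ker d_{\ast}^{G}$ using Lemma 3.3 together with the centre computations of Section 2.

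First I would observe that Lemma 3.2 guarantees $\varepsilon^{G}$ is a locally trivial fiber bundle, so exactness at $\pi_{0}(\mathcal{F}_{n}^{G}M)$ in the segment
\[
\pi_{1}(\mathcal{F}_{0}^{G}M) \xrightarrow{\varepsilon_{\ast}^{G}} P_{n}(M/G) \xrightarrow{d_{\ast}^{G}} \pi_{0}(\mathcal{F}_{n}^{G}M) \xrightarrow{i_{\ast}^{G}} \pi_{0}(\mathcal{F}_{0}^{G}M)
\]
of the sequence (1) is legitimate and yields $\ker i_{\ast}^{G} = \operatorname{im} d_{\ast}^{G} \cong P_{n}(M/G)/\ker d_{\ast}^{G}$.

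Next, Lemma 3.3 places $\ker d_{\ast}^{G}$ inside $Z(P_{n}(M/G))$. Under the hypothesis that $M/G$ is $S_{g}$ with $g\geq 2$ or $N_{k}$ with $k\geq 2$, Theorem 2.1 or Theorem 2.2 respectively gives $Z(P_{n}(M/G))=1$, so $\ker d_{\ast}^{G} = 1$ and the conclusion $\ker i_{\ast}^{G} \cong P_{n}(M/G)$ follows by the calculation above.

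There is no serious obstacle in this case. All the substantive work --- the fibration structure (Lemma 3.2), the commutator argument that forces $\ker d_{\ast}^{G}$ into the centre (Lemma 3.3), and the triviality of the centres in the large-genus regime (Theorems 2.1 and 2.2) --- is already in hand. The only housekeeping item is to confirm, via the Euler characteristic relation (2) and the case list following it, that the hypothesis on $M/G$ is compatible with an admissible covering surface $M$; this is immediate from the enumeration of possible covers already carried out in the paragraph preceding the theorem, and in particular rules out the borderline spherical/toral/projective cases where the centre is nontrivial (these are reserved for the subsequent theorems).
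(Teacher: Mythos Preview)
Your proposal is correct and follows exactly the paper's own argument: the exact sequence (1) from Lemma 3.2 gives $\ker i_{\ast}^{G}\cong P_{n}(M/G)/\ker d_{\ast}^{G}$, Lemma 3.3 forces $\ker d_{\ast}^{G}\subset Z(P_{n}(M/G))$, and Theorems 2.1 and 2.2 make that centre trivial in the stated genus range. The Euler characteristic remark is likewise the same compatibility check the paper carries out just before stating the theorem.
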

Consider $M$ is $\mathbb{S}^{2}$. Because of the formula (2), the only nontrivial group $G\neq1$ that acts freely on the sphere is $\mathbb{Z}_{2}$. When $G$ is trivial and $n\geq3$, the discussion is the same as that of ~\cite{ref5} and ~\cite{ref6}. And since $Z(P_{1}(\mathbb{S}^{2}))=Z(P_{2}(\mathbb{S}^{2}))=1$, $\ker i_{\ast}^{G}\cong P_{n}(\mathbb{S}^{2})$. We only discuss the situation where the orbit space is $\mathbb{R}P^{2}$.
\begin{Lemma}
If $G=\mathds{Z}_{2}$ acts on $\mathbb{S}^{2}$ antipolarlly, then for $n\geqslant1$
$$Z(P_{n}(\mathbb{R}P^{2}))\subset \ker d_{\ast}^{G}.$$
\end{Lemma}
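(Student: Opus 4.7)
The plan is to use exactness of sequence $(1)$: $\ker d_*^G = \mathrm{im}\,\varepsilon_*^G$, so the lemma reduces to producing, for each generator of $Z(P_n(\mathbb{R}P^2))$, a loop in $\mathcal{F}_0^G\mathbb{S}^2$ whose image under the pure evaluation map realizes it. By Theorem 2.6, for $n\geq 2$ this center is cyclic, generated by $\tau_n=\tau_{n1}\cdots\tau_{nn}$, so a single loop suffices.

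The main construction is a simultaneous lift of the representative $h(t)=(h_1(t),\ldots,h_n(t))$ of $\tau_n$ drawn in Figure 3. Each coordinate $h_i$ arises by projecting a full rotation of $\mathbb{S}^2$ down to $\mathbb{R}P^2$, so instead of lifting coordinate by coordinate I would lift the whole loop at once: choose an axis $\ell\subset\mathbb{R}^3$ so that the latitudes through $x_1,\ldots,x_n$ relative to $\ell$ are all distinct, and let $H_t:\mathbb{S}^2\to\mathbb{S}^2$ be rotation about $\ell$ by angle $2\pi t$ for $t\in[0,1]$. Since $H_t\in SO(3)$ and the antipodal action is given by the central element $-I$, each $H_t$ commutes with the $G$-action and preserves the orientation of $\mathbb{S}^2$, so $H_t\in\mathcal{F}_0^G\mathbb{S}^2$; together with $H_0=H_1=\mathrm{id}$ this produces a loop $H\in\pi_1(\mathcal{F}_0^G\mathbb{S}^2,\mathrm{id})$ whose image under evaluation is
\[
\varepsilon_*^G(H)(t)=\bigl([H_t(x_1)],\ldots,[H_t(x_n)]\bigr),
\]
which by construction coincides with the loop $h(t)$. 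Hence $\tau_n\in\mathrm{im}\,\varepsilon_*^G=\ker d_*^G$, and therefore $Z(P_n(\mathbb{R}P^2))=\langle\tau_n\rangle\subset\ker d_*^G$.

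The main obstacle will be verifying that the loop produced by the $SO(3)$ full rotation represents the prescribed algebraic element $\tau_n$ and not some other central element or power of it; this is a picture-by-picture check matching the $2\pi$-rotation trajectories to the word $\tau_{n1}\cdots\tau_{nn}$ expanded in the generators $B_{ij}$, $\rho_i$ of Theorem 2.3, in the same spirit as Birman's identification of the central full-twist in $P_n(\mathbb{S}^2)$. The case $n=1$, where the center is the full group $P_1(\mathbb{R}P^2)=\mathbb{Z}_2$, is treated as a separate base case.
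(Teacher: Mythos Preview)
Your approach is essentially identical to the paper's: both reduce to showing $\tau_n\in\operatorname{im}\varepsilon_*^G=\ker d_*^G$ by exhibiting the full $2\pi$-rotation loop $H_t:(\alpha,\theta)\mapsto(\alpha+2\pi t,\theta)$ on $\mathbb{S}^2$, noting it is $\mathbb{Z}_2$-equivariant and orientation-preserving, and identifying $\varepsilon_*^G([H_t])$ with the representative $h(t)$ of $\tau_n$ from Figure~3. The paper simply asserts this identification without the verification step you flag as the ``main obstacle''; your caution there is reasonable but the argument is the same.

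One comment on the case $n=1$ that you defer. Here $Z(P_1(\mathbb{R}P^2))=\pi_1(\mathbb{R}P^2)\cong\mathbb{Z}_2$ is generated by $\rho_1$, whereas $\tau_1=\rho_1^2=1$, so Theorem~2.4 gives nothing. In fact $\rho_1\notin\operatorname{im}\varepsilon_*^G$: for any loop $H_t$ in $\mathcal{F}_0^G\mathbb{S}^2$ based at the identity one has $H_0(x_1)=H_1(x_1)=x_1$, so $t\mapsto H_t(x_1)$ is a closed loop in $\mathbb{S}^2$ and its projection $t\mapsto[H_t(x_1)]$ is null-homotopic in $\mathbb{R}P^2$. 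Thus the inclusion $Z(P_1(\mathbb{R}P^2))\subset\ker d_*^G$ actually fails. The paper's proof does not address this either---it invokes Theorem~2.4, which is stated only for $n\geq 2$---so your separate treatment will not succeed as hoped; the lemma should really be read as holding for $n\geq 2$.
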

\begin{proof}
Since $Z(P_{n}(\mathbb{R}P^{2}))$ is generated by $\tau_{n}$ from Theorem 2.4, we only need to prove $d_{\ast}^G(\tau_{n})=1$. Define $H_{t}:\mathbb{S}^{2}\rightarrow \mathbb{S}^{2}$ by
\begin{align*}
H_{t}:(\alpha,\theta)\mapsto(\alpha+2\pi t,\theta),
\end{align*}
where the $i$th component of $\tau_{n}$ can be represented by $[H_{t}(\mathbf{x_{i}})]$, $i=1,\cdots,n$. Clearly $\tau_{n}=\varepsilon_{\ast}^{G}([H_{t}])$. Thus we obtain $\tau_{n}\in\ker d_{\ast}^{G}$.
\end{proof}
As for the nonorientable surface $\mathbb{R}P^{2}$, $G$ must be trivial because of the formula (2).
\begin{Lemma}
If $G$ is trivial and $M$ is $\mathbb{R}P^{2}$, then $Z(P_{n}(\mathbb{R}P^{2}))\subset \ker d_{\ast}^{G}$.
\end{Lemma}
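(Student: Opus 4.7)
The strategy mirrors exactly the proof of Lemma 3.4, the only difference being that the rotation loop is constructed directly on $\mathbb{R}P^{2}$ instead of being defined on $\mathbb{S}^{2}$ and pushed down via the antipodal quotient. First I would reduce to exhibiting a single element: by Theorem 2.4 (for $n\geq 2$), $Z(P_{n}(\mathbb{R}P^{2}))$ is cyclic generated by $\tau_{n}$, and by exactness of the sequence (1) one has $\ker d_{\ast}^{G}=\mathrm{im}\,\varepsilon_{\ast}^{G}$; so proving the containment amounts to producing a class $[H_{t}]\in\pi_{1}(\mathcal{F}_{0}^{G}\mathbb{R}P^{2})$ with $\varepsilon_{\ast}^{G}([H_{t}])=\tau_{n}$.

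For the construction, I parametrize $\mathbb{R}P^{2}$ by spherical coordinates $(\alpha,\theta)$ via the antipodal quotient of $\mathbb{S}^{2}$, and put
\[
H_{t}:\mathbb{R}P^{2}\longrightarrow \mathbb{R}P^{2},\qquad (\alpha,\theta)\longmapsto (\alpha+2\pi t,\theta).
\]
Longitudinal rotation commutes with the antipodal involution $(\alpha,\theta)\mapsto(\alpha+\pi,\pi-\theta)$, so each $H_{t}$ descends to a well defined orientation preserving homeomorphism of $\mathbb{R}P^{2}$, and $H_{0}=H_{1}=\mathrm{id}$, so $\{H_{t}\}_{t\in[0,1]}$ is a genuine loop in $\mathcal{F}_{0}^{G}\mathbb{R}P^{2}$ (since $G$ is trivial the $G$-equivariance condition is vacuous). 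Because $G$ is trivial, the bracket $[\,\cdot\,]$ on points becomes the identity, and $\varepsilon^{G}(H_{t})=(H_{t}(x_{1}),\dots,H_{t}(x_{n}))$ traces, coordinate by coordinate, precisely the same loop in $\mathbb{R}P^{2}$ that appears in the proof of Lemma 3.4 (there obtained by projecting from $\mathbb{S}^{2}$, here obtained directly on $\mathbb{R}P^{2}$).

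The final step is to invoke the computation of Lemma 3.4 (visualized in Figure 3): the $i$th component of the trajectory is exactly the $i$th component of $\tau_{n}$ in $P_{n}(\mathbb{R}P^{2})$, so $\varepsilon_{\ast}^{G}([H_{t}])=\tau_{n}$, giving $\tau_{n}\in\ker d_{\ast}^{G}$ and therefore $Z(P_{n}(\mathbb{R}P^{2}))\subset\ker d_{\ast}^{G}$.

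\textbf{Main obstacle.} The nontrivial content is the identification of the rotation trajectory with $\tau_{n}$ in terms of the generators $B_{ij},\rho_{k}$ of Theorem 2.3; however this verification is identical to the one already implicit in Lemma 3.4 and pictured in Figure 3, so no new computation is required. A minor bookkeeping point is that Theorem 2.4 describes the center only for $n\geq 2$: when $n=1$ the group $P_{1}(\mathbb{R}P^{2})=\mathbb{Z}_{2}$ is abelian and the argument must be either stated separately or absorbed into the convention (by direct inspection, the same rotation loop realizes the generator $\rho_{1}$ in that degenerate case as well, so the inclusion holds).
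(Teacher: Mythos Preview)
Your proposal is correct and follows essentially the same approach as the paper: reduce to the generator $\tau_{n}$ via Theorem~2.4, then exhibit the longitudinal rotation $H_{t}:[(\alpha,\theta)]\mapsto[(\alpha+2\pi t,\theta)]$ on $\mathbb{R}P^{2}=\mathbb{S}^{2}/\{\pm1\}$ as a loop in $\mathcal{F}_{0}^{G}\mathbb{R}P^{2}$ whose image under $\varepsilon_{\ast}^{G}$ is $\tau_{n}$. One small slip: drop the phrase ``orientation preserving'' for $H_{t}$, since $\mathbb{R}P^{2}$ is nonorientable and the definition of $\mathcal{F}_{0}^{G}M$ imposes no orientation condition in that case.
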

\begin{proof}
Since $Z(P_{n}(\mathbb{R}P^{2}))$ is generated by $\tau_{n}$ from Theorem 2.4, we only need to prove $d_{\ast}^G(\tau_{n})=1$. View $\mathbb{R}P^{2}$ as the quotient space of $\mathbb{S}^{2}$. Define $H_{t}:\mathbb{R}P^{2}\rightarrow \mathbb{R}P^{2}$ by
\begin{align*}
H_{t}:[(\alpha,\theta)]\mapsto[(\alpha+2\pi t,\theta)],
\end{align*}
where the $i$th component of $\tau_{n}$ can be represented by $H_{t}(\mathbf{x_{i}})$, $i=1,\cdots,n$. Clearly $\tau_{n}=\varepsilon_{\ast}^{G}([H_{t}])$. Thus we obtain $\tau_{n}\in\ker d_{\ast}^{G}$.
\end{proof}
We conclude the situation where the quotient space is $\mathbb{S}^{2}$ or $\mathbb{R}P^{2}$:
\begin{Theorem}
Let $i_{\ast}^{G}:\pi_{0}(\mathcal{F}_{n}^{G}M)\rightarrow\pi_{0}(\mathcal{F}_{0}^{G}M)$ be the homomorphism induced by inclusion $\mathcal{F}_{n}^{G}M\subset \mathcal{F}_{0}^{G}M$. Then
\begin{align*}
\ker i_{\ast}^{G}&\cong P_{n}(M/G)/Z(P_{n}(M/G)),\quad if\quad M/G \quad is \quad \mathbb{S}^{2} \quad or \quad \mathbb{R}P^{2}.
\end{align*}
\end{Theorem}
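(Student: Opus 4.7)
The plan is to combine the long exact sequence (1) with the identifications of $\ker d_{\ast}^{G}$ already supplied by Lemmas 3.3--3.5. Exactness of
\[
\pi_{1}(\mathcal{F}_{0}^{G}M)\xrightarrow{\varepsilon_{\ast}^{G}} P_{n}(M/G)\xrightarrow{d_{\ast}^{G}}\pi_{0}(\mathcal{F}_{n}^{G}M)\xrightarrow{i_{\ast}^{G}}\pi_{0}(\mathcal{F}_{0}^{G}M)
\]
gives $\ker i_{\ast}^{G}=\operatorname{im} d_{\ast}^{G}\cong P_{n}(M/G)/\ker d_{\ast}^{G}$ by the first isomorphism theorem, so the theorem reduces to showing $\ker d_{\ast}^{G}=Z(P_{n}(M/G))$.

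First I would use the Euler-characteristic formula (2) together with the freeness of the action to enumerate the admissible pairs $(M,G)$ with $M/G\in\{\mathbb{S}^{2},\mathbb{R}P^{2}\}$: the only possibilities are $(\mathbb{S}^{2},1)$ with quotient $\mathbb{S}^{2}$, and either $(\mathbb{R}P^{2},1)$ or $(\mathbb{S}^{2},\mathbb{Z}_{2})$ (acting antipodally) with quotient $\mathbb{R}P^{2}$. Lemma 3.3 supplies the inclusion $\ker d_{\ast}^{G}\subseteq Z(P_{n}(M/G))$ uniformly across these cases, so only the reverse inclusion requires work.

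For the two cases with quotient $\mathbb{R}P^{2}$, Lemmas 3.4 and 3.5 already produce explicit loops $H_{t}$ in $\mathcal{F}_{0}^{G}M$ whose image under $\varepsilon_{\ast}^{G}$ is the generator $\tau_{n}$ of $Z(P_{n}(\mathbb{R}P^{2}))$ from Theorem 2.4, forcing $Z(P_{n}(\mathbb{R}P^{2}))\subseteq\ker d_{\ast}^{G}$. For the quotient $\mathbb{S}^{2}$ case only the trivial action occurs, and I would invoke the classical Birman argument: when $n=1,2$ the center is trivial by Theorem 2.1 and both sides of the desired isomorphism reduce to $P_{n}(\mathbb{S}^{2})$, while for $n\geq 3$ each of the two generators of $Z(P_{n}(\mathbb{S}^{2}))\cong\mathds{Z}_{2}^{2}$ can be realized as $\varepsilon_{\ast}([H_{t}])$ for an appropriate one-parameter family of rotations of $\mathbb{S}^{2}$, yielding the reverse inclusion exactly as in \cite{ref5},\cite{ref6}.

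Assembling these observations gives $\ker d_{\ast}^{G}=Z(P_{n}(M/G))$ in every admissible case, and the exact sequence then delivers the isomorphism $\ker i_{\ast}^{G}\cong P_{n}(M/G)/Z(P_{n}(M/G))$. I do not anticipate a substantive obstacle: the hard work was front-loaded into the center computations of Section 2 and into Lemmas 3.3--3.5, so the present theorem is essentially a book-keeping synthesis. The only point that demands care is checking that the $n=1,2$ subcases on $\mathbb{S}^{2}$ are compatible with the single unified statement, which they are because quotienting by a trivial center is vacuous.
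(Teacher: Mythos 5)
Your proposal follows essentially the same route as the paper: reduce via the exact sequence (1) to identifying $\ker d_{\ast}^{G}$, enumerate the admissible pairs $(M,G)$ using formula (2), apply Lemma 3.3 for the inclusion $\ker d_{\ast}^{G}\subseteq Z(P_{n}(M/G))$, use the explicit rotation loops of Lemmas 3.4 and 3.5 for the reverse inclusion when the quotient is $\mathbb{R}P^{2}$, and defer to Birman's argument for the trivial-action sphere case. The argument is correct as a synthesis of those ingredients and matches the paper's proof.
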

Next ,we consider the situation where $M$ the quotient space $M/G$ is $\mathbb{T}^{2}$. From formula (2), $M$ can be $\mathbb{T}^{2}$ or Klein bottle $N_{1}$ But we claim that
\begin{Lemma}
The Klein bottle $N_{1}$ can not be a cover of the torus $\mathbb{T}^{2}$.
\end{Lemma}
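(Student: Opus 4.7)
The plan is to rule out the covering by contradiction, using a topological invariant that distinguishes the two surfaces. Note first that the Euler-characteristic formula (2) from the paper is useless here, since $\chi(N_{1}) = \chi(\mathbb{T}^{2}) = 0$ and so $l\chi(\mathbb{T}^{2}) = \chi(N_{1})$ holds for every integer $l$; one therefore needs a genuinely topological obstruction. Two natural candidates are orientability and the fundamental group, and either works.

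The cleaner route uses orientability. Suppose for contradiction that a covering map $p \colon N_{1} \to \mathbb{T}^{2}$ exists. Since $p$ is in particular a local homeomorphism of $2$-manifolds and $\mathbb{T}^{2}$ is orientable, I would pull back a continuous nowhere-vanishing section of the orientation bundle of $\mathbb{T}^{2}$ along $p$ to produce such a section on $N_{1}$. This would force $N_{1}$ to be orientable, contradicting the fact that the Klein bottle is non-orientable. The backup argument works at the level of fundamental groups: a covering map would induce an injection $p_{\ast} \colon \pi_{1}(N_{1}) \hookrightarrow \pi_{1}(\mathbb{T}^{2}) = \mathbb{Z}^{2}$, but $\pi_{1}(N_{1}) = \langle a, b \mid abab^{-1} \rangle$ is non-abelian (its abelianization is $\mathbb{Z} \oplus \mathbb{Z}/2$), so it cannot embed into $\mathbb{Z}^{2}$.

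There is no real obstacle; both arguments are a couple of lines and rely only on standard facts about covering maps of manifolds. The main thing to flag is why the lemma is being stated at all — namely, to restrict the possible total spaces in the $M/G = \mathbb{T}^{2}$ case of the equivariant analysis: combining this lemma with formula (2) forces $M = \mathbb{T}^{2}$, which is what is needed to set up the explicit computation of $\ker i_{\ast}^{G}$ in Theorem 3.3.
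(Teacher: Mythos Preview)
Your proposal is correct, and in fact your ``backup'' argument via fundamental groups is precisely the route the paper takes: the paper observes that $\pi_{1}$ of the Klein bottle is non-abelian (they use the presentation $\langle x,y\mid x^{2}=y^{2}\rangle$), while $\pi_{1}(\mathbb{T}^{2})\cong\mathds{Z}\oplus\mathds{Z}$ is abelian, so the injection $p_{\ast}$ that a covering would induce is impossible.

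Your primary orientability argument is a genuinely different and arguably cleaner alternative: it avoids any group-theoretic computation and uses only the fact that orientability pulls back along local homeomorphisms. What the fundamental-group approach buys, by contrast, is that it stays entirely within the algebraic framework the paper is already using (the covering exact sequence $1\to\pi_{1}(M)\xrightarrow{p_{\ast}}\pi_{1}(M/G)\to G\to 1$ appears just above the lemma), so no new topological input is needed. Either argument is a one-liner; your identification of the purpose of the lemma---ruling out the Klein bottle so that $M=\mathbb{T}^{2}$ is forced in the $M/G=\mathbb{T}^{2}$ case---is exactly right.
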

\begin{proof}
$\pi_{1}(N_{1})$ is isomorphic to the non-abelian group $<x,y|x^{2}=y^{2}>$ while $\pi_{1}(\mathbb{T}^{2})$ is isomorphic to the abelian group $\mathds{Z}\bigoplus\mathds{Z}$. Suppose $N_{1}$ is a cover of $\mathbb{T}^{2}$, there exists a injective:
$$\pi_{1}(N_{1})\rightarrow \pi_{1}(\mathbb{T}^{2}).$$
Since the subgroup of an Abelian group is also an Abelian group, the injective map is impossible.
\end{proof}
Thus when $M/G$ is $\mathbb{T}^{2}$, $M$ can only be $\mathbb{T}^{2}$. In order to distinguish the original space and the orbit space, we denote the orbit space by $\overline{\mathbb{T}}^{2}$. Unlike the non-equivariant case, it is possible that $Z(P_{n}(\overline{\mathbb{T}}^{2})\nsubseteq \ker d_{\ast}^{G}$. There exist examples where $Z(P_{n}(\overline{\mathbb{T}}^{2})\nsubseteq \ker d_{\ast}^{G}$.
\begin{Example}
We view the torus as $\mathbb{R}/\mathds{Z}\times \mathbb{R}/\mathds{Z}$ and denote a point in $\mathbb{T}^{2}$ by its rectangular coordinates $([u],[v])$, where $[u],[v]$ are real numbers module 1. Denote each fixed point $\mathbf{x_{i}}$ by $([u_{i}],[v_{i}])$, for $i=1,\cdots,n$. Then $\widetilde{{a}}$ and $\widetilde{b}$ can be represented by $f(t)$ and $g(t)$ for $t\in[0,1]$:
\begin{align*}
f(t)&=([u_{1}-t],[v_{1}]),\cdots,([u_{n}-t],[v_{n}]));\\
g(t)&=([u_{1}],[v_{1}-t]),\cdots,([u_{n}],[v_{n}-t])).
\end{align*}
We can plot the components of $f(t)$ and $g(t)$ in Figure 4, 5 when $n$ is 4.
\begin{figure}[htbp]
\subfigure{
\begin{minipage}[t]{0.5\linewidth}
\centering
\includegraphics[width=1\textwidth]{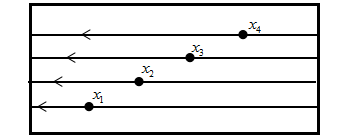}
\caption*{Figure 4}
\end{minipage}}
\subfigure{
\begin{minipage}[t]{0.5\linewidth}
\centering
\includegraphics[width=1\textwidth]{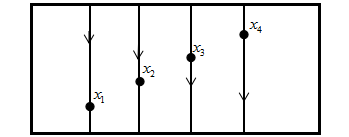}
\caption*{Figure 5}
\end{minipage}}
\end{figure}

Define $\mathds{Z}_{2}$-action on $\mathbb{T}^{2}$ by
\begin{align*}
-1:\mathbb{T}^{2}&\rightarrow \mathbb{T}^{2}\\
([u],[v])&\mapsto([u-\frac{1}{2}],[v]).
\end{align*}
We will compute $d_{\ast}^{\mathds{Z}_{2}}(\widetilde{{a}})$. Denote $\widetilde{{a}}_{i}$ to be the $i$-th component of $\widetilde{{a}}$. Construct the homeomorphsim $H_{t}$ for $t\in[0,1]$ by
\begin{align*}
H_{t}:\mathbb{T}^{2}&\rightarrow \mathbb{T}^{2}\\
([u],[v])&\mapsto([u-\frac{1}{2}t],[v]),
\end{align*}
which satisfies $H_{0}=id$ and $H_{t}(x_{i})=\widetilde{{a}}_{i}$. Since $d_{\ast}^{\mathds{Z}_{2}}(\widetilde{{a}})=H_{1}\neq id$, $Z(P_{n}(\overline{\mathbb{T}}^{2}))\nsubseteq \ker d_{\ast}^{\mathds{Z}_{2}}$.
\end{Example}

Although $Z(P_{n}(\overline{\mathbb{T}}^{2})\nsubseteq \ker d_{\ast}^{\mathds{Z}_{2}}$ in general, we can construct the isomorphism between $\ker d_{\ast}^{G}$ and $imp_{\ast}$.
\begin{Lemma}
$$\ker d_{\ast}^{G}\cong imp_{\ast}.$$
\end{Lemma}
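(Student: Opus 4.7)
The plan is to show that, under the canonical identification of $\pi_{1}(\overline{\mathbb{T}}^{2})$ with the central diagonal subgroup $Z(P_{n}(\overline{\mathbb{T}}^{2}))=\langle\widetilde{a},\widetilde{b}\mid\widetilde{a}\widetilde{b}=\widetilde{b}\widetilde{a}\rangle$ of $P_{n}(\overline{\mathbb{T}}^{2})$, the two groups $\ker d_{\ast}^{G}$ and $\mathrm{im}\, p_{\ast}$ coincide, both being equal to $\langle\widetilde{a}^{q},\widetilde{b}^{r}\rangle$. Writing $\mathbb{T}^{2}=\mathbb{R}^{2}/\mathbb{Z}^{2}$ and realising the free $G=\mathds{Z}/q\mathds{Z}\oplus\mathds{Z}/r\mathds{Z}$ action by translations, the orbit space is $\overline{\mathbb{T}}^{2}=\mathbb{R}^{2}/\Lambda'$ with $\Lambda'=(1/q)\mathbb{Z}\oplus(1/r)\mathbb{Z}$; the covering $p:\mathbb{T}^{2}\to\overline{\mathbb{T}}^{2}$ therefore induces $p_{\ast}(a)=\widetilde{a}^{q}$, $p_{\ast}(b)=\widetilde{b}^{r}$, so $\mathrm{im}\, p_{\ast}=\langle\widetilde{a}^{q},\widetilde{b}^{r}\rangle$. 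Lemma 3.3 already provides the ambient containment $\ker d_{\ast}^{G}\subset\langle\widetilde{a},\widetilde{b}\rangle$, so the two sides can be compared inside a common rank-two free abelian group.

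For the inclusion $\mathrm{im}\, p_{\ast}\subseteq\ker d_{\ast}^{G}$ I will, as in Example 3.2, produce explicit representing loops. The families $H_{t}^{(1)}([u],[v])=([u-t],[v])$ and $H_{t}^{(2)}([u],[v])=([u],[v-t])$ are $G$-equivariant homeomorphisms of $\mathbb{T}^{2}$ satisfying $H_{0}=H_{1}=\mathrm{id}$, so they define elements of $\pi_{1}(\mathcal{F}_{0}^{G}\mathbb{T}^{2})$. Under $\varepsilon_{\ast}^{G}$ each strand travels once around the $a$- or $b$-cycle of $\mathbb{T}^{2}$, which covers the corresponding short cycle of $\overline{\mathbb{T}}^{2}$ exactly $q$ (respectively $r$) times, producing $\widetilde{a}^{q}$ (respectively $\widetilde{b}^{r}$). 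Since $\ker d_{\ast}^{G}=\mathrm{im}\,\varepsilon_{\ast}^{G}$, this yields the desired inclusion.

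For the reverse inclusion, take any $\alpha=\varepsilon_{\ast}^{G}([H_{t}])\in\ker d_{\ast}^{G}$ with $H_{0}=H_{1}=\mathrm{id}$, and lift the path $\{H_{t}\}$ to a path $\{\widetilde{H}_{t}\}$ of self-homeomorphisms of the universal cover $\mathbb{R}^{2}$ with $\widetilde{H}_{0}=\mathrm{id}$. The endpoint $\widetilde{H}_{1}$ covers $\mathrm{id}_{\mathbb{T}^{2}}$ and is therefore a deck transformation of $\mathbb{R}^{2}\to\mathbb{T}^{2}$, i.e.\ a translation $T_{(m,n)}$ for some $(m,n)\in\mathbb{Z}^{2}$ which is independent of the basepoint. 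Each strand then lifts to a path from $\widetilde{x}_{i}$ to $\widetilde{x}_{i}+(m,n)$; using the basis relations $(1,0)=q\cdot(1/q,0)$ and $(0,1)=r\cdot(0,1/r)$ inside $\Lambda'$, this path projects to $\widetilde{a}^{mq}\widetilde{b}^{nr}$ in $\pi_{1}(\overline{\mathbb{T}}^{2})$. All strands share the same translation vector, hence $\alpha=\widetilde{a}^{mq}\widetilde{b}^{nr}\in\mathrm{im}\, p_{\ast}$.

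The main technical obstacle is the lifting step, specifically the assertion that $\widetilde{H}_{1}$ is a \emph{single} global translation rather than a spatially varying one. I intend to handle this by path-lifting in $\mathrm{Homeo}(\mathbb{R}^{2})$ starting from the identity, combined with the rigidity that any self-homeomorphism of $\mathbb{R}^{2}$ covering $\mathrm{id}_{\mathbb{T}^{2}}$ is a deck transformation of the regular covering $\mathbb{R}^{2}\to\mathbb{T}^{2}$ and is determined by its value at a single point. Once this is secured, the two inclusions combine to give $\ker d_{\ast}^{G}=\langle\widetilde{a}^{q},\widetilde{b}^{r}\rangle=\mathrm{im}\, p_{\ast}$, which is the required isomorphism.
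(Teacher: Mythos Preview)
Your proposal is correct and shares the paper's overall strategy: both sides are compared inside $Z(P_{n}(\overline{\mathbb{T}}^{2}))\cong\pi_{1}(\overline{\mathbb{T}}^{2})$ via Lemma~3.3, and the inclusion $\mathrm{im}\,p_{\ast}\subseteq\ker d_{\ast}^{G}$ is obtained exactly as you do, by translating $\mathbb{T}^{2}$ along a loop. The paper differs from you in two respects worth noting. First, for the reverse inclusion it bypasses your ``main technical obstacle'' entirely: instead of lifting the isotopy $\{H_{t}\}$ to $\mathrm{Homeo}(\mathbb{R}^{2})$ and arguing that $\widetilde{H}_{1}$ is a global deck translation, it simply observes that because $H_{0}=H_{1}=\mathrm{id}$ already on $\mathbb{T}^{2}$, the path $t\mapsto H_{t}(\mathbf{x}_{i})$ is itself a loop in $\mathbb{T}^{2}$, and its class pushes forward under $p_{\ast}$ to the $i$-th strand $a^{m}b^{r}$ of $\alpha$; hence $a^{m}b^{r}\in\mathrm{im}\,p_{\ast}$ immediately. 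Your universal-cover argument reaches the same conclusion but with extra machinery you could drop. Second, the logical ordering differs: you presuppose $G=\mathds{Z}/q\mathds{Z}\oplus\mathds{Z}/r\mathds{Z}$ acting by translations and compute $\mathrm{im}\,p_{\ast}=\langle\widetilde{a}^{q},\widetilde{b}^{r}\rangle$ at the outset, whereas the paper proves the abstract isomorphism $\ker d_{\ast}^{G}\cong\mathrm{im}\,p_{\ast}$ for an arbitrary free action with torus quotient and only afterwards deduces the structure of $G$ and of $\mathrm{im}\,p_{\ast}$.
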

\begin{proof}
Since $\ker d_{\ast}^{G}\subset Z(P_{n}(\overline{\mathbb{T}}^{2}))=<\widetilde{{a}},\widetilde{b}:\widetilde{{a}}\widetilde{b}=\widetilde{b}\widetilde{{a}}>$ and $imp_{\ast}\subset \pi_{1}(\overline{\mathbb{T}}^{2})=<{a},b:{a}b=b{a}>$, we can define $\varphi:\ker d_{\ast}^{G}\rightarrow \pi_{1}(\overline{\mathbb{T}}^{2})$ by
$$\varphi(\widetilde{{a}}^{m}\widetilde{b}^{r})={a}^{m}b^{r}.$$
$\widetilde{{a}}$ and $\widetilde{b}$ can be represented by loops
\begin{align*}
f:&I\rightarrow F(\overline{\mathbb{T}}^{2},n)\\
&t\mapsto([u_{1}-t],[v_{1}]),\cdots,([u_{n}-t],[v_{n}]))
\end{align*}
and
\begin{align*}
g:&I\rightarrow F(\overline{\mathbb{T}}^{2},n)\\
&t\mapsto([u_{1}],[v_{1}-t]),\cdots,([u_{n}],[v_{n}-t]))
\end{align*}
where $f(0)=f(1)=g(0)=g(1)=(\mathbf{x_{1}},\cdots,\mathbf{x_{n}})$. According to the construction of $d_{\ast}^{G}$, there exists the $G$-homeomorphism $H_{t}:\mathbb{T}^{2}\rightarrow \mathbb{T}^{2}$ for $t\in[0,1]$, such that $H_{0}=H_{1}=id$ and $$(p(H_{t}(\mathbf{x_{1}})),\cdots,p(H_{t}(\mathbf{x_{n}})))=f^{m}g^{r}(t)$$
Define the map $H(x_{i}):I\rightarrow \mathbb{T}^{2}$ by $H(\mathbf{x_{i}})(t)=H_{t}(\mathbf{x_{i}})$. Then we obtain that $p_{\ast}([H(\mathbf{x_{i}})])={a}^{m}b^{r}$. Thus ${a}^{m}b^{r}\in imp_{\ast}$.

Conversely, construct $\psi:imp_{\ast}\rightarrow Z(P_{n}(\overline{\mathbb{T}}^{2}))$ by
$$\psi({a}^{m}b^{r})=\widetilde{{a}}^{m}\widetilde{b}^{r}.$$
So there exists a homotopy class $C\in \pi_{1}(\mathbb{T}^{2})$ such that $p_{\ast}(C)={a}^{m}b^{r}$. Now, $C$ can be represented by some loop $c=([c_{1}],[c_{2}]):I\rightarrow \mathbb{T}^{2}$ where $c(0)=c(1)=([0],[0])$. Then construct the $G$-homeomorphism $H_{t}:\mathbb{T}^{2}\rightarrow \mathbb{T}^{2}$ by
$$H_{t}([u],[v])=([u+c_{1}],[v+c_{2}]),$$
which satisfies $H_{0}=H_{1}=id$ and $(p(H_{t}(\mathbf{x_{1}})),\cdots,p(H_{t}(\mathbf{x_{n}})))=f^{m}g^{r}(t)$. Thus $\widetilde{{a}}^{m}\widetilde{b}^{r}\in \ker d_{\ast}^{G}$.
\end{proof}
Since $\mathbb{T}^{2}$ is the covering space of $\overline{\mathbb{T}}^{2}$ and $\pi_{1}(\overline{\mathbb{T}}^{2})$ is an abelian group, each $imp_{\ast}$ is uniquely corresponding to the subgroup of $\pi_{1}(\overline{\mathbb{T}}^{2})\cong\mathds{Z}\bigoplus\mathds{Z}$. And because $G$ is finite, there exist positive integers $q,r$ such that $\widetilde{{a}}^{q},\widetilde{b}^{r}$ generate $imp_{\ast}$. In this case, $G=\mathds{Z}/q\mathds{Z}\bigoplus\mathds{Z}/r\mathds{Z}$ and $imp_{\ast}=<{a}^{q},b^{r}:{a}^{q}b^{r}=b^{r}{a}^{q}>$. We conclude this situation as follows.
\begin{Theorem}
Let $G$ be a discrete group acting on $M$ freely and properly with the quotient space $\overline{\mathbb{T}}^{2}$. Then $M$ must be $\mathbb{T}^{2}$ and each $G$ satisfying the condition is in the form of $\mathds{Z}/q\mathds{Z}\bigoplus\mathds{Z}/r\mathds{Z}$, for positive integers $q,r$. Let $i_{\ast}^{G}:\pi_{0}(\mathcal{F}_{n}^{G}M)\rightarrow\pi_{0}(\mathcal{F}_{0}^{G}M)$ be the homomorphism induced by inclusion. Then
$$\ker i_{\ast}^{G}\cong P_{n}(M/G)/<\widetilde{{a}}^{q},\widetilde{b}^{r}:\widetilde{{a}}^{q}\widetilde{b}^{r}=\widetilde{b}^{r}\widetilde{{a}}^{q}>.$$
\end{Theorem}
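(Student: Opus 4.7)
My plan is to combine three ingredients already in the excerpt: the Euler-characteristic formula $(2)$ together with Lemma~3.4 to pin down $M$; the covering exact sequence with the structure theorem for finite-index subgroups of $\mathds{Z}^{2}$ to pin down $G$; and the long exact sequence $(1)$ together with Lemma~3.5 to compute $\ker i_{\ast}^{G}$.

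First I would identify $M$ and $G$. Compactness of $M$ together with freeness and properness of the action force $G$ to be finite of some order $l$; formula $(2)$ with $\chi(\overline{\mathbb{T}}^{2})=0$ forces $\chi(M)=0$, so $M$ is either $\mathbb{T}^{2}$ or the Klein bottle $N_{1}$, and Lemma~3.4 rules out the latter. Thus $M=\mathbb{T}^{2}$, and the covering map $p:\mathbb{T}^{2}\to\overline{\mathbb{T}}^{2}$ yields the short exact sequence $1\to\pi_{1}(\mathbb{T}^{2})\xrightarrow{p_{\ast}}\pi_{1}(\overline{\mathbb{T}}^{2})\to G\to 1$. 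So $\mathrm{im}\,p_{\ast}$ is a subgroup of $\pi_{1}(\overline{\mathbb{T}}^{2})\cong\mathds{Z}\oplus\mathds{Z}$ of index $l$, and Smith normal form provides a basis $\{a,b\}$ of $\pi_{1}(\overline{\mathbb{T}}^{2})$ and positive integers $q,r$ with $qr=l$ and $\mathrm{im}\,p_{\ast}=\langle a^{q},b^{r}\rangle$; in particular $G\cong\mathds{Z}/q\mathds{Z}\oplus\mathds{Z}/r\mathds{Z}$.

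Next I would compute $\ker i_{\ast}^{G}$. Exactness of $(1)$ gives $\ker i_{\ast}^{G}\cong\mathrm{im}\,d_{\ast}^{G}\cong P_{n}(\overline{\mathbb{T}}^{2})/\ker d_{\ast}^{G}$. By Lemma~3.5 the assignment $\widetilde{a}^{m}\widetilde{b}^{s}\mapsto a^{m}b^{s}$ is an isomorphism between $\ker d_{\ast}^{G}$ and $\mathrm{im}\,p_{\ast}$, so transporting the description $\mathrm{im}\,p_{\ast}=\langle a^{q},b^{r}\rangle$ across it yields $\ker d_{\ast}^{G}=\langle\widetilde{a}^{q},\widetilde{b}^{r}:\widetilde{a}^{q}\widetilde{b}^{r}=\widetilde{b}^{r}\widetilde{a}^{q}\rangle$, which produces the stated quotient.

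The delicate point, and the main obstacle, is compatibility of the two basis choices: Lemma~3.5 is stated with respect to specific generators $\widetilde{a},\widetilde{b}\in Z(P_{n}(\overline{\mathbb{T}}^{2}))$ coming from the standard coordinates of $\overline{\mathbb{T}}^{2}$, while Smith normal form demands the freedom to choose a basis of $\pi_{1}(\overline{\mathbb{T}}^{2})$ adapted to $\mathrm{im}\,p_{\ast}$. I would handle this by observing that any $GL_{2}(\mathds{Z})$ change of basis of $\pi_{1}(\overline{\mathbb{T}}^{2})$ is realised by a self-homeomorphism of $\overline{\mathbb{T}}^{2}$, which lifts equivariantly to $\mathbb{T}^{2}$; naturality of the pure evaluation map $\varepsilon^{G}$ under such a homeomorphism then allows Lemma~3.5 to be applied in the preferred basis, closing the argument.
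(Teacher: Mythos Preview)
Your proposal is correct and follows the same route as the paper: Euler characteristic plus the Klein-bottle lemma to identify $M=\mathbb{T}^{2}$, the covering exact sequence together with the structure of finite-index subgroups of $\mathds{Z}^{2}$ to identify $G$, and exact sequence~(1) combined with the isomorphism $\ker d_{\ast}^{G}\cong\mathrm{im}\,p_{\ast}$ to obtain the stated quotient. Two minor remarks: the lemmas you cite as~3.4 and~3.5 are numbered~3.6 and~3.7 in the paper, and the basis-compatibility issue you flag and resolve via Smith normal form and a realizing self-homeomorphism of $\overline{\mathbb{T}}^{2}$ is a point the paper passes over without comment, so your version is in fact slightly more complete on that detail.
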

Finally we study the relationship between surface orbit braid groups and equivariant mapping class groups. Similarly, we define the evaluation map between $\mathcal{B}_{0}^{G}(M)=\mathcal{F}_{0}^{G}(M)$ and $F(M/G,n)/\Sigma_{n}$.
\begin{Definition}
\begin{align*}
\eta^{G}:\mathcal{B}_{0}^{G}M&\rightarrow F(M/G,n)/\Sigma_{n}\\
f&\mapsto\{[f(\mathbf{x_{1}})],\cdots,[f(\mathbf{x_{n}})]\}
\end{align*}
is called the evaluation map.
 \end{Definition}
Since the evaluation map $\eta^{G}$ is the pure evaluation $\varepsilon^{G}$ associated with the projective map $F(M/G,n)\rightarrow F(M/G,n)/\Sigma_{n}$, we obtain
\begin{Lemma}
$\eta^{G}$ is a locally trivial fiber bundle with fiber $\mathcal{B}_{n}^{G}M$.
\end{Lemma}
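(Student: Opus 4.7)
The plan is to mirror the proof of Lemma 3.3, using the factorization $\eta^G = \pi \circ \varepsilon^G$ where $\pi: F(M/G,n) \to F(M/G,n)/\Sigma_n$ is the orbit projection of the free $\Sigma_n$-action. Since this action is properly discontinuous and free, $\pi$ is a (regular) covering map. The strategy is therefore to transport the local cross section of $\varepsilon^G$ from Lemma 3.2 across a local inverse of $\pi$ and then invoke the Steenrod principle used in Lemma 3.3.

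First I would fix an arbitrary point $\{[\boldsymbol{a}_1],\ldots,[\boldsymbol{a}_n]\} \in F(M/G,n)/\Sigma_n$ and lift it to $([\boldsymbol{a}_1],\ldots,[\boldsymbol{a}_n]) \in F(M/G,n)$. Taking the open neighborhood $U(\boldsymbol{a})$ from Lemma 3.2 and shrinking it so that $\pi$ restricts to a homeomorphism onto its image $V(\boldsymbol{a}) := \pi(U(\boldsymbol{a}))$, I obtain a continuous local cross section
\[
\widetilde{\chi} := \chi \circ \bigl(\pi|_{U(\boldsymbol{a})}\bigr)^{-1} : V(\boldsymbol{a}) \longrightarrow \mathcal{B}_0^G M,
\]
since $\mathcal{B}_0^G M = \mathcal{F}_0^G M$ (there is no distinction between set-preservation and point-preservation when there are no marked points). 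Unwinding the definitions, $\eta^G \circ \widetilde{\chi} = \mathrm{id}_{V(\boldsymbol{a})}$, so $\eta^G$ admits a local cross section everywhere.

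Next I would identify the fiber of $\eta^G$ over the basepoint $\{[\mathbf{x}_1],\ldots,[\mathbf{x}_n]\}$. An element $f \in \mathcal{B}_0^G M$ lies in this fiber iff $\{[f(\mathbf{x}_1)],\ldots,[f(\mathbf{x}_n)]\} = \{[\mathbf{x}_1],\ldots,[\mathbf{x}_n]\}$, which is equivalent to $f(G\{\mathbf{x}_1,\ldots,\mathbf{x}_n\}) = G\{\mathbf{x}_1,\ldots,\mathbf{x}_n\}$; this is precisely the defining condition of $\mathcal{B}_n^G M$. So the fiber is $\mathcal{B}_n^G M$, a closed subgroup of $\mathcal{B}_0^G M$.

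Finally, since $F(M/G,n)/\Sigma_n$ is paracompact (as a quotient of a paracompact manifold by a free finite group action), $\mathcal{B}_n^G M$ is a closed subgroup of $\mathcal{B}_0^G M$, and the induced continuous bijection $\mathcal{B}_0^G M / \mathcal{B}_n^G M \to F(M/G,n)/\Sigma_n$ is a homeomorphism (checked coordinatewise as in Lemma 3.3), the same reference to Steenrod's local-section criterion cited in Lemma 3.3 promotes the existence of local sections to a locally trivial fiber bundle structure with fiber $\mathcal{B}_n^G M$. The only potentially subtle step is verifying that $\widetilde{\chi}$ genuinely lands in $\mathcal{B}_0^G M$ and is continuous; but this follows directly because $\chi$ already takes values in $\mathcal{F}_0^G M = \mathcal{B}_0^G M$ and $\pi$ is a local homeomorphism on $U(\boldsymbol{a})$, so I do not expect any serious obstacle.
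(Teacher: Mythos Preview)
Your proposal is correct and follows essentially the same route as the paper: the paper simply observes that $\eta^{G}$ is the pure evaluation $\varepsilon^{G}$ composed with the covering projection $F(M/G,n)\to F(M/G,n)/\Sigma_{n}$ and states the lemma without further argument. Your write-up spells out exactly this factorization, transports the local section of $\varepsilon^{G}$ through a local inverse of the covering, and then reinvokes the Steenrod criterion; this is a more detailed version of the paper's one-line justification, not a different approach.
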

Let $j_{\ast}^{G}:\pi_{0}(\mathcal{B}_{n}^{G}M)\rightarrow\pi_{0}(\mathcal{B}_{0}^{G}M)$ be the homomorphism induced by inclusion $\mathcal{B}_{n}^{G}M\subset \mathcal{B}_{0}^{G}M$. Then there is a long exact sequence
\begin{equation}
\cdots\rightarrow\pi_{1}\mathcal{B}_{0}^{G}M\xrightarrow{\eta_{\ast}^{G}} B_{n}(M/G)\xrightarrow{\zeta_{\ast}^{G}}\pi_{0}\mathcal{B}_{n}^{G}M\xrightarrow{j_{\ast}^{G}}\pi_{0}\mathcal{B}_{0}^{G}M\rightarrow\pi_{0}(F(M/G)/\Sigma_{n})=1
\end{equation}
\\By observing the exact sequence(1) and (3), we obtain:
$$\ker \zeta_{\ast}^{G}=im \eta_{\ast}^{G}=im \varepsilon_{\ast}^{G}=\ker d_{\ast}^{G}.$$
From the previous discussion about $\ker d_{\ast}^{G}$ before, we can conclude:
\begin{Theorem}
Let $j_{\ast}^{G}:\pi_{0}(\mathcal{B}_{n}^{G}M)\rightarrow\pi_{0}(\mathcal{B}_{0}^{G}M)$ be the homomorphism induced by inclusion $\mathcal{B}_{n}^{G}M\subset \mathcal{B}_{0}^{G}M$. Then
\begin{align*}
\ker j_{\ast}^{G}&\cong B_{n}(M/G),\quad if\quad M/G \quad is \quad S_{g},\quad g\geq2 \quad or \quad N_{k},\quad k\geq2;\\
\ker j_{\ast}^{G}&\cong B_{n}(M/G)/Z(P_{n}(M/G)),\quad if\quad M/G \quad is \quad \mathbb{S}^{2} \quad or \quad \mathbb{R}P^{2};\\
\end{align*}
When $M/G$ is $\mathbb{T}^{2}$,
\begin{align*}
M&=\mathbb{T}^{2},\quad G=\mathds{Z}/q\mathds{Z}\bigoplus\mathds{Z}/r\mathds{Z},\quad q,r\geq1 \quad and\\
\ker j_{\ast}^{G}&\cong B_{n}(M/G)/<\widetilde{{a}}^{q},\widetilde{b}^{r}:\widetilde{{a}}^{q}\widetilde{b}^{r}=\widetilde{b}^{r}\widetilde{{a}}^{q}>. \end{align*}
\end{Theorem}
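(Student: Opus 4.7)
The plan is to read $\ker j_\ast^G$ off the exact sequence~(3) and then import the three case-analyses that were already carried out for $\ker d_\ast^G$ in the proofs of Theorems 3.1--3.3. Specifically, from~(3) the map $\zeta_\ast^G$ induces an isomorphism $B_n(M/G)/\ker\zeta_\ast^G\cong\operatorname{im}\zeta_\ast^G=\ker j_\ast^G$, and the observation $\ker\zeta_\ast^G=\operatorname{im}\eta_\ast^G=\operatorname{im}\varepsilon_\ast^G=\ker d_\ast^G$ (a consequence of the commutative square relating the pure evaluation $\varepsilon^G$ to the evaluation $\eta^G$, which in turn comes from the fact that $\eta^G$ factors through $\varepsilon^G$ as noted just before Lemma~3.9) reduces the entire theorem to writing $\ker j_\ast^G\cong B_n(M/G)/\ker d_\ast^G$ and plugging in what is already known about $\ker d_\ast^G$ in each of the three geometries.

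First I would spell out the identification $\ker j_\ast^G\cong B_n(M/G)/\ker d_\ast^G$ with a one-line diagram chase on~(3), pointing back to Lemma~3.9 and to the paragraph identifying $\ker\zeta_\ast^G$ with $\ker d_\ast^G$. Then I would treat the three cases in turn, recycling the earlier computations: when $M/G$ is $S_g$ with $g\geq 2$ or $N_k$ with $k\geq 2$, Lemma~3.3 together with the triviality of $Z(P_n(M/G))$ (from Theorem~2.1 and Theorem~2.2) force $\ker d_\ast^G=1$, giving $\ker j_\ast^G\cong B_n(M/G)$; when $M/G$ is $\mathbb{S}^2$ or $\mathbb{R}P^2$, combining Lemma~3.3 with Lemma~3.5 or Lemma~3.6 yields $\ker d_\ast^G=Z(P_n(M/G))$, so $\ker j_\ast^G\cong B_n(M/G)/Z(P_n(M/G))$; and when $M/G$ is $\mathbb{T}^2$, Lemma~3.8 forces $M=\mathbb{T}^2$, and Lemma~3.10 together with the subsequent identification of $\operatorname{im}p_\ast$ as $\langle\widetilde{a}^q,\widetilde{b}^r:\widetilde{a}^q\widetilde{b}^r=\widetilde{b}^r\widetilde{a}^q\rangle$ with $G=\mathbb{Z}/q\mathbb{Z}\oplus\mathbb{Z}/r\mathbb{Z}$ delivers the third formula.

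The only genuinely non-cosmetic point to verify is that the normal subgroup $\ker d_\ast^G\subset P_n(M/G)\subset B_n(M/G)$, which a priori is defined inside the pure braid group, remains normal (in fact central on the image side) when viewed inside the full braid group $B_n(M/G)$, so that the quotient on the right-hand side is well-defined. For the $S_g$ and $N_k$ cases this is automatic because the subgroup is trivial; for the $\mathbb{S}^2$ and $\mathbb{R}P^2$ cases it is automatic because the center $Z(P_n(M/G))$ is characteristic and its generator $\tau_n$ is invariant under the $\Sigma_n$-action inherited through $B_n/P_n\cong\Sigma_n$ (visible from the explicit formula for $\tau_n$ given just before Lemma~2.3); and in the $\mathbb{T}^2$ case the same holds because the generators $\widetilde{a}^q,\widetilde{b}^r$ lie in the center of $P_n(\overline{\mathbb{T}}^2)$ and are symmetric in the $n$ strands.

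The main obstacle, such as it is, is therefore bookkeeping rather than new ideas: one must be careful that the quotient on the right is the quotient of $B_n(M/G)$ by the \emph{normal closure} of $\ker d_\ast^G$ and that this normal closure coincides with $\ker d_\ast^G$ itself. Once the centrality/symmetry remark above is in place, the theorem follows by direct substitution of each case into the isomorphism $\ker j_\ast^G\cong B_n(M/G)/\ker d_\ast^G$.
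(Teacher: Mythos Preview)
Your proposal is correct and follows essentially the same route as the paper: read $\ker j_\ast^G$ off the exact sequence~(3), identify $\ker\zeta_\ast^G=\operatorname{im}\eta_\ast^G=\operatorname{im}\varepsilon_\ast^G=\ker d_\ast^G$, and then invoke the case analysis already performed for $\ker d_\ast^G$ in Theorems~3.1--3.3. One small remark: your ``only non-cosmetic point'' about normality is in fact automatic and need not be checked separately, since $\ker\zeta_\ast^G$ is by definition the kernel of a homomorphism out of $B_n(M/G)$ and is therefore normal in $B_n(M/G)$; the equality $\ker\zeta_\ast^G=\ker d_\ast^G$ then transports this normality for free, without any appeal to the $\Sigma_n$-symmetry of $\tau_n$ or of $\widetilde{a}^q,\widetilde{b}^r$.
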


\end{document}